\documentclass[12pt]{amsart}

\usepackage{lmodern}
\usepackage[english]{babel}
\usepackage{mathtools}
\usepackage[hidelinks]{hyperref}

\numberwithin{equation}{section} 
\theoremstyle{plain} 
\newtheorem{theorem}{Theorem}[section]
\newtheorem{lemma}[theorem]{Lemma}
\newtheorem{claim}[theorem]{Claim}

\newtheorem{corollary}[theorem]{Corollary}
\newtheorem{remark}[theorem]{Remark}
\newtheorem{example}[theorem]{Example}

\theoremstyle{definition}

\usepackage{xcolor}

\title[On extreme points in shift-invariant spaces]{Extreme and exposed points of shift-invariant spaces generated by Gaussian kernel and hyperbolic secant}

 \author[Hagen]{Markus Valås Hagen}
 \address{Department of Mathematical Sciences, Norwegian University of Science and Technology (NTNU), 7491 Trondheim, Norway}
 \email{markus.v.hagen@ntnu.no} 
 
 \author[Ulanovskii]{Alexander Ulanovskii}
 \address{Department of Mathematics and Physics, University of Stavanger, 4036 Stavanger, Norway}
 \email{alexander.ulanovskii@uis.no} 

\author[Zelent]{Denis Zelent}
\address{Department of Mathematical Sciences, Norwegian University of Science and Technology (NTNU), 7491 Trondheim, Norway} 
 \email{denis.zelent@ntnu.no} 
 
\author[Zlotnikov]{Ilya Zlotnikov}
\address{Department of Mathematical Sciences, Norwegian University of Science and Technology (NTNU), 7491 Trondheim, Norway} 
\email{ilia.k.zlotnikov@ntnu.no}
\keywords{Extreme points, Exposed points, Geometry of the unit ball, Shift-invariant spaces, Gaussian kernel}

\newcommand{\R}{{\mathbb R}}  \newcommand{\Z}{{\mathbb Z}} \newcommand{\N}{{\mathbb N}}

\newcommand{\Cc}{{\mathbb C}}

\newcommand{\ext}{{\rm Ext}}

\newcommand{\expo}{\,{\rm Exp}}
\def\res{\mathop{{\rm Res}}\limits}

\begin{document}

\thanks{Hagen, Zelent, and Zlotnikov were supported by Grant 334466 of the Research Council of Norway.}

\begin{abstract}
We characterize the extreme and exposed points of the unit ball (with respect to the $L^1$-norm) in the shift-invariant space generated by the Gaussian function, as well as in the quasi shift-invariant space generated by the hyperbolic secant.
\end{abstract}

\maketitle

\section{Introduction and Main Results}

\subsection{Extreme and exposed points}

Let $X$ be a Banach space and let $C\subset X$ be a closed convex set. An element $f \in C$ is called an extreme point of $C$ if it does not lie in the interior of any nontrivial line segment contained in $C$; that is, there do not exist distinct points $f_1, f_2 \in C$ and a scalar $0 < \alpha < 1$ such that $f = \alpha f_1 + (1-\alpha) f_2$. Among the extreme points, an element $f \in C$ is said to be an exposed point of $C$ if there exists a continuous linear functional $\varphi \in X^\ast$ such that $$\varphi(f) > \varphi(g) \quad \text{for all } g \in C \setminus \{f\}.$$
Equivalently, $\varphi$ attains its  maximum over $C$ uniquely at $f$.

Let
$$B(X)= \{ f \in X : \|f\| \le 1 \}, \qquad S(X) = \{ f \in X : \|f\| = 1 \}$$ denote the unit ball and the unit sphere of $X$.
We denote the sets of extreme and exposed points of $B(X)$ by
$\ext(X)$ and  $\expo(X),$
respectively. Note that $\expo(X)\subset \ext(X)\subset S(X)$.

The sets $\ext(X)$ and $\expo(X)$ play a fundamental role in the geometry of the underlying space through the classical Krein–Milman theorem. They arise naturally in a wide range of problems, including the characterization of linear isometries of function spaces  \cite{MR121646} and applications in prediction theory \cite{MR705750}.
These sets have been explored in numerous function spaces, with analytic function spaces receiving particularly sustained attention. The most significant and interesting cases arise for function spaces equipped with the 
$L^p$-norm when $p=1$ or $p=\infty$ (see also Remark \ref{rm} below). In these settings, the extreme and exposed points of the unit ball have been extensively investigated, especially for the classical Hardy spaces and their variants \cite{MR283557, MR98981, MR4394687, MR348467}, various polynomial spaces \cite{MR1783617, MR2024728, MR4205713, MR2403441},  and certain Paley–Wiener spaces \cite{MR1783617, MR4574376}. For a broader perspective, we refer the reader to the survey \cite{MR4582781}, which also highlights several interesting open problems.

This paper presents, to the best of our knowledge, the first study of the extreme and exposed points of the unit ball in shift-invariant and quasi shift-invariant function spaces. We focus on the classical generators—the Gaussian kernel and the hyperbolic secant—so that the resulting spaces consist of entire or meromorphic functions on the complex plane $\mathbb C$.
These spaces also play an important role in modern sampling theory and in the theory of Gabor frames. From the perspective of these theories, they exhibit very similar properties due to the relation between the Zak transforms of the Gaussian and hyperbolic secant functions, see \cite{MR4047939, MR1884237, MR4782146}. However, as we demonstrate below, their properties differ when examined from the standpoint of Banach space geometry.

\subsection{Shift-invariant spaces and quasi shift-invariant spaces}

Let $1 \leq p \leq \infty$, and let $\Gamma\subset\R$ be a separated set, i.e.,
$$
\inf\limits_{\gamma, \gamma' \in \Gamma,\gamma\ne\gamma'} |\gamma - \gamma'| >0.
$$

The quasi shift-invariant space $V_{\Gamma}^p(g)$ generated by a function $g$ consists of the functions 
$$
f(x) = \sum\limits_{\gamma \in \Gamma} c_{\gamma} g(x-\gamma), \quad  \{c_\gamma\} \in l^p(\Gamma).
$$
In what follows, we denote by $c_\gamma(f)$  the coefficient $c_\gamma$  in the representation above.

Following tradition, we assume throughout that the set of $\Gamma$‑shifts is relatively dense; that is, there exists an $R>0$ such that every interval of length $R$ contains at least one point of $\Gamma$. This assumption is not essential for the subsequent analysis.
 
In the particular case $\Gamma = \Z$, the space $V^p(g):=V^p_{\Z}(g)$ is called shift-invariant.

The space  $V_{\Gamma}^p(g)$ is well-defined whenever $g$ belongs to the Wiener amalgam space $W$, i.e.,
\begin{equation}\label{eq:wiener}
\sum\limits_{k\in\Z} \|g\|_{L^{\infty}[k,k+1]}< \infty,
\end{equation}
since  then $\|f\|_{L^p(\R)}$ is finite for every $f \in V^p_{\Gamma}(g)$, see e.g. \cite[Lemma~8.1]{MR4865221}. 

As usual, we equip the spaces $V^p_{\Gamma}$ with the standard $L^p$-norm.
The quasi shift-invariant spaces considered below satisfy the following condition: 
for every $p \in [1, \infty]$ there exist positive constants $K_1, K_2$ such that
\begin{equation}\label{eq:norm_equiv}
    K_1 \|c\|_{l^p} \le \|f\|_{L^p(\R)} \le K_2 \|c\|_{l^p},\quad \text{ for every $f \in V^p_{\Gamma}(g).$}
\end{equation}
This property shows that  $V^p_{\Gamma}(g)$ is a closed subspace of $L^p(\R)$.

Note that when $\Gamma = \Z$ and $f\in W$, the norm equivalence~\eqref{eq:norm_equiv} holds if and only
if the Fourier transform $\hat{g}$ does not completely vanish on any arithmetic progression of step $1$, 
see \cite{Jia1991}. For the case of general separated sets $\Gamma$, we refer the reader to \cite{MR4950401}. 

Like the Paley--Wiener spaces, (quasi) shift-invariant spaces play a fundamental role in approximation theory. In particular, the sampling and interpolation properties of the spaces $V^p_{\Gamma}(g)$ are closely connected to the frame properties of the Gabor systems generated by $g$, see, e.g., \cite{MR4456797, grs, MR4047939, MR3053565, MR4782146, MR4865221}. Important examples arise when $g$ is the Gaussian kernel or the hyperbolic secant:
\begin{equation}\label{eq:gauss_secant_def}
    G_a(x):= e^{-ax^2}, \quad H_a(x) = \frac{1}{e^{ax}+{e^{-ax}}}, 
\end{equation}
where $a > 0.$

It is well-known that the spaces $V^p(G_a)$ and $V^p_{\Gamma}(H_a)$ satisfy~\eqref{eq:norm_equiv}, see e.g. \cite{MR3053565}, \cite{MR4865221}, \cite{MR4950401}.
It is easy to check that every element of $V^p(G_a)$ admits analytic extension to an entire function, while every element of $V^p_\Gamma(H_a)$ admits analytic extension to a meromorphic function.

\begin{remark}\label{rm} For the spaces  $V^p(G_a)$ and $V^p_\Gamma(H_a)$ with $1<p<\infty$, the set of extreme points of the unit ball coincides with the unit sphere. This follows directly from the classical equality case in Minkowski’s inequality: $\|f+g\|_{L^p(\R)}=\|f\|_{L^p(\R)}+ \|g\|_{L^p(\R)}$ iff $f=ag$ for some $a\geq0$.  \end{remark}

The present paper examines the geometry of the unit balls of these spaces in the case $p=1.$

\subsection{Main results}

\begin{theorem}\label{th:gauss_ext}{\rm (Gaussian kernel)}
{\rm (i)} The set $\ext(V^1(G_a))$ consists of those functions $f\in V^1(G_a)$ satisfying  $\|f\|_{1} = 1$ and such that 
\begin{equation}\label{s_zeros}
   \text{There is no  point $\lambda\in \mathbb C$ with $0<{\rm Im\,}\lambda<\pi/a$ for which } f(\lambda) = f(\bar \lambda) = 0.\end{equation}

{\rm   (ii)}      The  set $\expo(V^1(G_a))$ consists of those functions $f\in \ext(V^1(G_a))$  for which the following two conditions hold: 
    \begin{equation}\label{eq:exposed_cond}
     \text{There is no point }   \lambda \in \R \quad \text{such that} \quad f(\lambda) = f'(\lambda) = 0;
    \end{equation} 
  
\begin{equation}\label{ex}
\int_\R e^{2ax}|f(x)|\,dx =\int_\R e^{-2ax}|f(x)|\,dx =\infty.
\end{equation}
\end{theorem}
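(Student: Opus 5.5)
The plan is to pass to a Laurent-series model of $V^1(G_a)$ and then use the standard $L^1$ criteria for extreme and exposed points. For $f=\sum_n c_n G_a(\cdot-n)$ we have
$$f(z)=e^{-az^2}F(e^{2az}),\qquad F(w)=\sum_{n\in\Z} c_n e^{-an^2}w^n,$$
where $F$ is analytic in $\Cc\setminus\{0\}$. The map $z\mapsto w=e^{2az}$ sends the strip $0<{\rm Im}\,z<\pi/a$ onto $\Cc\setminus[0,\infty)$, and it sends $\bar\lambda$ to $\bar w$. So condition \eqref{s_zeros} says exactly this: $F$ has no pair of zeros $w_0,\bar w_0$ with $w_0\notin[0,\infty)$. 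A single zero on $(-\infty,0)$ counts as such a pair, since then $w_0=\bar w_0$ corresponds to ${\rm Im}\,\lambda=\pi/(2a)$.

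I will also record which functions lie in $V^1(G_a)$. A function $G$ analytic in $\Cc\setminus\{0\}$ gives an element $g(z)=e^{-az^2}G(e^{2az})$ of $V^1(G_a)$ when its Laurent coefficients satisfy $\sum_n |b_n|e^{an^2}<\infty$. This is needed for $G=F\cdot w/((w-w_0)(w-\bar w_0))$ and for $G=F\cdot w/(w-w_0)^2$ when $F$ vanishes at $w_0,\bar w_0$ (respectively to second order at $w_0$). I will check the coefficient condition by writing the quotient as a contour integral and using Cauchy estimates on circles $|w|=e^{2an}$, where $F$ itself satisfies the corresponding bounds. Finally, multiplication by $w^{\pm1}$ corresponds to $f\mapsto e^{\pm 2ax}f$, which is just an index shift of the coefficients (up to a constant factor).

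Part (i) rests on the classical $L^1$ criterion: $f\in S(V^1)$ is extreme if and only if there is no nonzero $g\in V^1(G_a)$ with $g/f$ real-valued and bounded on $\R$ (equivalently $|g|\le C|f|$).

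If \eqref{s_zeros} fails, take $\varepsilon G=\varepsilon F\cdot w/((w-w_0)(w-\bar w_0))$. The ratio $w/|w-w_0|^2$ is real and bounded on $(0,\infty)$ and tends to $0$ at both ends. Then $f\pm \varepsilon g$ splits $f$ nontrivially.

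Conversely, suppose such a $g$ exists. Then $R=G/F$ is meromorphic in $\Cc\setminus\{0\}$, real on $(0,\infty)$, so $R(\bar w)=\overline{R(w)}$, and it has no poles on $(0,\infty)$ because it is bounded there. If $R$ has a pole, it sits at a zero $w_0\notin[0,\infty)$ of $F$, and by symmetry $\bar w_0$ is also a zero. This contradicts \eqref{s_zeros}. So it remains to show that a pole-free $R$ is constant, which gives $g=cf$ with $c$ real; the only cases to exclude are then $g=\pm f$ up to scaling.

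For that step put $r(z)=R(e^{2az})=g(z)/f(z)$. This is entire, $i\pi/a$-periodic, and bounded on $\R$ and hence on $\R+i\pi/a$. I would apply Phragmén–Lindelöf in the strip $0\le {\rm Im}\,z\le \pi/a$. This requires an a priori growth bound on $r$ there, which I would obtain from a minimum-modulus (Cartan-type) estimate for the order-$2$ function $f$ along a sequence of vertical segments, combined with the bound $|g(z)|\le Ce^{a({\rm Im}\,z)^2}$ in the strip. \textbf{This is the main obstacle}: one needs $\log|r|$ to grow slower than $e^{a|{\rm Re}\,z|}$ in the strip, uniformly enough to run the argument.

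Part (ii) uses the exposing functional $\varphi(g)=\int_\R g\,\overline{f}/|f|\,dx$. The point $f$ is exposed if and only if every $g\in V^1(G_a)$ with $g/f\ge 0$ a.e. is a nonnegative multiple of $f$, since any exposing functional must be of this form on the support of $f$.

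Necessity is by explicit examples. If $f$ has a double real zero $\lambda$, take $G=F\cdot w/(w-w_0)^2$ with $w_0=e^{2a\lambda}$; the ratio $w/(w-w_0)^2$ is nonnegative on $(0,\infty)$. If, say, $\int e^{2ax}|f|<\infty$, take $g=e^{2ax}f$, which lies in $V^1(G_a)$ by the coefficient shift noted above.

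Sufficiency goes as follows. Suppose $R=G/F\ge0$ on $(0,\infty)$. Off-axis poles are excluded by extremality, exactly as in (i). A real pole of $R$ would require a real zero of $f$ of order at least two: a simple zero of $F$ makes $R$ change sign, and multiple real zeros are forbidden by \eqref{eq:exposed_cond}. So $R$ is analytic in $\Cc\setminus\{0\}$. Repeating the growth argument from (i), now without boundedness on the axis but with the same strip estimates, should show that $R$ is a Laurent polynomial. Positivity of $R$ and $g\in L^1$ then force any term $w^{k}$ with $k\ne0$ to give $\int e^{\pm2ax}|f|<\infty$ (together with the coefficient comparison), which contradicts \eqref{ex}. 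Hence $R$ is a positive constant and $f$ is exposed.
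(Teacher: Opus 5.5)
Your part (i) works in outline, and it reaches the a priori bound by a different route from the paper. A Cartan-type lower bound $\log|f(x+iy)|\ge -Cx^2$ on suitably chosen vertical segments $x=x_k\to\pm\infty$, $0\le y\le\pi/a$, together with $|g|\le Ce^{ay^2}$, gives the growth control you need. Phragm\'en--Lindel\"of in the strip against $\exp(-\varepsilon\cosh(\beta(z-i\pi/(2a))))$ with $\beta<a$ then shows $r$ is bounded, hence constant. This replaces the paper's Lemma \ref{c24}.

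\textbf{Main gap: sufficiency in (ii).} There $\tau=g/f$ is no longer bounded on $\R$. So ``the same strip estimates'' give only an upper bound, $\log|R(w)|=O(\log^2|w|)$ as $w\to0,\infty$. This does not make $R$ a Laurent polynomial: $\prod_{n\ge1}(1+we^{-n})$ is positive on $(0,\infty)$, transcendental, and grows exactly at this rate. Your growth argument also never uses \eqref{ex}, yet \eqref{ex} is what must exclude such $R$; without it, $R(w)=w$ is an admissible nonconstant multiplier.

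The missing ingredient is a \emph{lower} bound for the transcendental part $R^+(w)=\sum_{n>0}b_nw^n$. Your Cartan estimate is a bound on $f$, so it cannot supply this. The paper gets it from Hayman's theorem for functions with $\log M(r)=O(\log^2 r)$ (Lemma \ref{hayman}): $\log|R^+|\ge\frac12\log M(r,R^+)$ on circles outside a set of finite logarithmic measure, and on almost every ray. For transcendental $R^+$ this gives $|\tau^+(z)|\ge Ce^{4x}$ on the far parts of the boundaries of the rectangles $0\le x\le r_k$, $y_2\le y\le y_1$. Since $\tau^+f=g-\tau^-f$ is bounded there, $e^{4z}f$ is bounded on these boundaries. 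Phragm\'en--Lindel\"of then gives $|f(x)|\le Ce^{-4x}$ for $x\ge0$, contradicting \eqref{ex}. Your final step only covers the case where $R$ has finitely many Laurent terms.

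\textbf{Smaller errors.}

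First, multiplication by $w$ is not an index shift up to a constant factor: $e^{2ax}e^{-a(x-n)^2}=e^{a(2n+1)}e^{-a(x-n-1)^2}$. So $e^{2ax}f\in V^1(G_a)$ iff $\sum_n|c_n(f)|e^{2an}<\infty$. Deriving this from $\int_\R e^{2ax}|f|\,dx<\infty$ is the nontrivial content of Lemma \ref{lcl}: analytic continuation of $\hat f$ gives $e^{2ax}f\in V^\infty(G_a)$, and then Lemma \ref{lgf} applies. Your necessity proof for \eqref{ex} skips this step.

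Second, if $w_0=\bar w_0\in(-\infty,0)$ is a simple zero of $F$, your $Fw/(w-w_0)^2$ has a pole. The right multiplier is $1/(w-w_0)$. It does not decay as $w\to0$, so the bound $\max_{|w|=e^{2am}}|F|\le e^{am^2}\|c\|_{l^1}$ only gives bounded weighted coefficients, not summable ones. You need either the sharper bound $\max_{|w|=e^{2am}}|F|\le e^{am^2}\sum_n|c_n|e^{-a(n-m)^2}$, which is summable in $m$, or the paper's route through $q\in L^1(\R)$ and Lemma \ref{lgf}.

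Third, in your extremality criterion, ``nonzero $g$'' should read ``$g$ not a real multiple of $f$''. Also, $|g|\le C|f|$ is not an equivalent condition, since it does not force $g/f$ to be real.
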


\begin{remark}
Similar conditions to \eqref{s_zeros} and \eqref{eq:exposed_cond} also arise in the setting of polynomials and Paley–Wiener spaces{\rm;} see e.g., \cite[Theorems~1 and~3]{MR1783617}.
\end{remark}

\begin{remark} {\rm (See Lemma \ref{lcl})} Let $f\in V^1(G_a).$ Condition \eqref{ex} is equivalent to requiring that neither sequence $c_{n}(f)e^{2an}$ nor $c_{n}{(f)}e^{-2an}$ belongs to $l^1(\Z)$, i.e.
        $$\sum_{n\in \Z}e^{2an}|c_n(f)|=\sum_{n \in \Z}e^{-2an}|c_n(f)|=\infty.$$
\end{remark}
 
For the hyperbolic secant, we obtain the description of $\ext(V^1_{\Gamma}(H_a))$ and $\expo(V^1_{\Gamma}(H_a))$  for general separated sets of translates $\Gamma.$ 

\begin{theorem}\label{th:hyp_ext}{\rm (Hyperbolic secant generator)} Let $\Gamma\subset\R$ be a separated relatively dense set.

 {\rm (i)}  The set $\ext(V_\Gamma^1(H_a))$ consists  of those functions $f\in V_\Gamma^1(H_a)$ satisfying $\|f\|_{1} = 1$, condition \eqref{s_zeros} and the condition
\begin{equation}\label{eq:c_non_zero}
    c_\gamma(f) \neq 0, \quad \text{for all \,} \gamma \in \Gamma.
\end{equation}

{\rm (ii)} The set  $\expo(V^1_{\Gamma}(H_a))$ consists of those functions  $f\in \ext(V^1_{\Gamma}(H_a))$  satisfying conditions \eqref{eq:exposed_cond} and \eqref{ex}.
\end{theorem}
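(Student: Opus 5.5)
The plan is to reduce everything to the classical $L^1$ criteria and then translate them into a rigidity problem for meromorphic functions. If $f\in S(X)$ and $f=\tfrac12(f_1+f_2)$ with $f_j\in B(X)$, then equality in $\|f_1\|_1+\|f_2\|_1\ge 2\|f\|_1$ forces $f_{1,2}=(1\pm u)f$ with $u$ real-valued and $|u|\le 1$ a.e. Hence $f$ fails to be extreme iff there is $h\neq 0$ in $V^1_\Gamma(H_a)$ with $h=uf$, where $u$ is real and bounded on $\R$. Similarly, testing against the functional $\varphi(g)=\int \overline{{\rm sgn} f}\, g$, one sees that $f$ fails to be exposed iff there is $g\neq f$, $\|g\|_1=1$, with $g=vf$, where $v\ge 0$ on $\R$ (possibly unbounded).

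The structural observation I would use is that $H_a(z+i\pi/a)=-H_a(z)$. Writing $w=e^{2az}$, every element has the form
$$f(z)=w^{1/2}F(w),\qquad F(w)=\sum_{\gamma\in\Gamma}\frac{c_\gamma(f)e^{a\gamma}}{w+e^{2a\gamma}},$$
so $F$ is a Cauchy--Stieltjes transform with simple poles at $-e^{2a\gamma}$. Consequently $u=h/f$ is $i\pi/a$-periodic and becomes $U(w)=H(w)/F(w)$, meromorphic in $\Cc\setminus\{0\}$ and real on $(0,\infty)$. I would first prove a converse lemma: a function of the form $w^{1/2}\Phi(w)$, with $\Phi$ having only simple poles contained in $\{-e^{2a\gamma}\}$, summable weighted residues and $\Phi(w)\to 0$ at $\infty$, belongs to $V^1_\Gamma(H_a)$. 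The proof is by partial fractions, using \eqref{eq:norm_equiv} for the $L^1$ bound.

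Necessity in (i) is then by construction. If $f(\lambda)=f(\bar\lambda)=0$ with $0<{\rm Im}\,\lambda<\pi/a$, put $\mu=e^{2a\lambda}$ and $U(w)=\varepsilon w/((w-\mu)(w-\bar\mu))$. This $U$ is real and bounded on $(0,\infty)$, and the poles of $U$ are cancelled by the zeros of $F$. (For ${\rm Im}\,\lambda=\pi/(2a)$ the two zeros coincide modulo the period; this case needs a separate but similar choice.) If instead $c_\gamma(f)=0$, take $U(w)=\varepsilon w/(w+e^{2a\gamma})$. This creates one admissible new pole at a point where $F$ is regular. In both cases the lemma puts $h=uf$ in the space, so $f$ is not extreme.

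For sufficiency in (i), assume \eqref{s_zeros} and \eqref{eq:c_non_zero}. The symmetry $U(\bar w)=\overline{U(w)}$ pairs a pole at $\mu$ with one at $\bar\mu$, which \eqref{s_zeros} forbids. Boundedness on $(0,\infty)$ excludes poles there. Poles on $(-\infty,0)$ would come either from zeros of $F$ (again ruled out by \eqref{s_zeros}, via the line ${\rm Im}\,z=\pi/(2a)$) or from poles of $H$ at points where $F$ is regular, which \eqref{eq:c_non_zero} excludes because every $-e^{2a\gamma}$ is already a pole of $F$. So $U$ is holomorphic in $\Cc\setminus\{0\}$. \textbf{The main obstacle} is to show that $U$ is constant, which forces $h=cf$ with $c\in\R$ and hence $c=0$ by normalization. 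My first attempt would be growth estimates for ratios of Cauchy transforms: upper bounds for $H$ and lower bounds for $|F|$ away from the negative axis and away from zeros. The aim is to show that $U$ has at most polynomial growth in $w$ and $w^{-1}$, so that $U$ is a Laurent polynomial; boundedness on $(0,\infty)$ then makes it constant. If direct lower bounds fail, I would use Phragmén--Lindelöf in sectors, exploiting the realness and boundedness of $U$ on the positive axis.

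For (ii), the same reduction applies with $v\ge 0$ allowed to be unbounded. If $f$ has a double real zero at $x_0$, the function $v=1+\varepsilon w/(w-e^{2ax_0})^2$ is nonnegative on $\R$, and $vf$ lies in the space by the lemma; normalizing gives $g\neq f$, so $f$ is not exposed. If $\int e^{2ax}|f|<\infty$, I would take $v=1+\varepsilon w$ (or $1+\varepsilon w^{-1}$ in the symmetric case). To see that $wf\in V^1_\Gamma(H_a)$, expand
$$wF(w)=\sum_\gamma c_\gamma e^{a\gamma}-\sum_\gamma\frac{c_\gamma e^{3a\gamma}}{w+e^{2a\gamma}},$$
and note that integrability of $e^{2ax}f$ forces both $\sum_\gamma c_\gamma e^{a\gamma}=0$ and $\sum_\gamma|c_\gamma|e^{3a\gamma}<\infty$ (the analogue of Lemma~\ref{lcl}). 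Conversely, assume \eqref{eq:exposed_cond} and \eqref{ex}. Then any admissible $V$ has no poles: the only candidates are at double real zeros (excluded by \eqref{eq:exposed_cond}) or, as in (i), at symmetric zero pairs. By the same growth argument $V$ is a Laurent polynomial, and \eqref{ex} kills the $w^{\pm1}$ terms, so $V$ is constant and $g=f$.
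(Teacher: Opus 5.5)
\textbf{There is a genuine gap: the step that shows the multiplier is constant is missing.}

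Your reduction to the $L^1$ criteria, the substitution $w=e^{2az}$, the necessity constructions and the pole analysis in the sufficiency of (i) are correct and agree with the paper. Your $w/(w+e^{2a\gamma})$ is a simpler substitute for the paper's $\cosh(x-\sigma')/\cosh(x-\sigma)$. The gap is exactly the step you call the main obstacle: nothing in the proposal shows that $U$ is constant.

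Phragm\'en--Lindel\"of in sectors needs an a priori growth bound for $U=H/F$, and producing that bound is the whole difficulty. $F$ has poles accumulating at $0$ and $\infty$, and may have many zeros off the axes (cancelled by zeros of $H$). You give no way to bound $|F|$ from below on curves where a maximum principle could be run.

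The missing idea has two parts. First, multiply numerator and denominator by a product $P$ with simple zeros exactly at $\Gamma+\frac{i\pi}{a}(\Z+\frac12)$. This clears all poles at the price of growth $e^{Kx^2}$, i.e.\ $\exp(K\log^2|w|)$. Second, apply Hayman's minimum-modulus theorem for functions with $\log M(r)=O(\log^2 r)$ to the denominator (Lemmas~\ref{hayman} and~\ref{c24}). This gives $|U(w)|\le \exp(K\log^2|w|)$ near $0$ and $\infty$. That is not polynomial growth, so ``$U$ is a Laurent polynomial'' is not what comes out. It does suffice for (i): the strip $0<{\rm Im}\,z<\pi/a$ corresponds to the slit plane $\Cc\setminus[0,\infty)$. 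Both edges of the slit lie on the positive axis, where $U$ is bounded, so Phragm\'en--Lindel\"of makes $U$ bounded and hence constant.

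In (ii) the gap is more serious, because $V$ need not be bounded on $(0,\infty)$. Neither Phragm\'en--Lindel\"of nor ``the same growth argument'' yields a Laurent polynomial, since $\log M(r)=O(\log^2 r)$ is compatible with a transcendental $V$. The paper (Section~\ref{34}) writes $V=a_0+V^++V^-$ and excludes a transcendental $V^+=\sum_{n>0}a_nw^n$ by using \eqref{ex} itself:
\begin{itemize}
\item minimum modulus gives $|V^+|\ge C|w|^2$ on the sides $x=r_k$, $y=y_1$, $y=y_2$ of suitable rectangles $\{x_0\le x\le r_k,\ y_2\le y\le y_1\}$;
\item on these rectangles $V^+f=Vf-(a_0+V^-)f$ is bounded;
\item hence $e^{4ax}f$ is bounded on their boundaries, and therefore inside, which contradicts \eqref{ex}.
\end{itemize}
Your ``kill the top power'' step applies only once $V^\pm$ is known to be a polynomial.

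There are also smaller problems.
\begin{itemize}
\item In the necessity of \eqref{ex}, the weight is wrong. The coefficients of $e^{2ax}f$ in $V^1_\Gamma(H_a)$ are $-c_\gamma e^{2a\gamma}$; the factor $e^{3a\gamma}$ belongs to the residues of $wF$. Moreover, $\sum|c_\gamma|e^{3a\gamma}<\infty$ does not follow from integrability.
\item More importantly, for a general separated $\Gamma$, deriving $\sum|c_\gamma|e^{2a\gamma}<\infty$ from $e^{2ax}f\in L^1(\R)$ is not a routine analogue of Lemma~\ref{lcl}. The paper needs a contour argument using a bounded Bernstein-space function that interpolates unimodular data on $\Gamma+i\pi/(2a)$, together with Gaussian damping.
\item Your converse lemma needs a growth hypothesis at $w=0$ and off the axis, the analogue of \eqref{2}, e.g.\ $|\Phi(w)|\le C|w|^{-1/2}$ away from the poles. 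As stated it fails: $\Phi(w)=e^{-1/w}-1$ satisfies your hypotheses, yet $w^{1/2}\Phi(w)$ is not in the space.
\end{itemize}
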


\begin{remark}\label{rem} {\rm (See Corollary \ref{c43})} For $f\in V^1_\Gamma(H_a),$ the condition $e^{2ax}f(x)\in L^1(\R)$ is equivalent to the two conditions
  $$\sum_{\gamma\in\Gamma}e^{a\gamma}c_\gamma(f)=0,\quad \sum_{\gamma\in\Gamma}e^{2a\gamma}|c_\gamma(f)|<\infty.$$
  
 Similarly,  $e^{-2ax}f(x)\in L^1(\R)$ if and only if
  $$\sum_{\gamma\in\Gamma}e^{-a\gamma}c_\gamma(f)=0, \quad \sum_{\gamma\in\Gamma}e^{-2a\gamma}|c_\gamma(f)|<\infty.$$
  \end{remark}

We illustrate the above results with a simple example.

\begin{example}
    Consider the function $$f_\sigma(x) = C(e^{-(x-1)^2} - \sigma e^{-(x+1)^2}) = Ce^{-x^2-1}(e^{2x}-\sigma e^{-2x})\in V^1(G_1),\quad \sigma\in\Cc,$$ where $C$ is chosen so that $\|f_{\sigma}\|_{L^1} =1$. Observe that $f_\sigma \in \ext(V^1(G_1))$ precisely when  $\sigma \notin \R.$
    Moreover, $f_\sigma$ is not an exposed point of $B(V^1(G_1))$, since the function $f_\sigma(x) e^{2x} \in L^1(\R).$
    In contrast,  any finite linear combination of translates of the hyperbolic secant clearly fails to lie in $\ext(V^1_{\Gamma}(H_a))$, in view of \eqref{eq:c_non_zero}. 
\end{example}

The proofs of Theorems \ref{th:gauss_ext} and \ref{th:hyp_ext} are complex‑analytic and rely on a deep result of Hayman  \cite{MR111839} concerning the relationship between the growth and decay of entire functions of slow growth. This method is effective because of a key periodicity phenomenon: elements of the quasi shift‑invariant spaces generated by the hyperbolic secant $H_a$ are $2\pi i/a$-periodic, and an analogous property holds for the shift‑invariant space generated by the Gaussian $G_a$ once the functions are multiplied by $\exp(ax^2).$
 We note that this periodicity was already exploited in \cite{MR4047939} to analyze sampling sets for shift‑invariant spaces generated by Gaussian and hyperbolic secant.
 
The paper is organized as follows. Sec. 2 recalls  standard criteria for extreme and exposed points and presents an auxiliary result from the theory of entire functions that will be essential in the sequel. In Sec.~3 we study shift-invariant spaces generated by the Gaussian kernel and prove Theorem~\ref{th:gauss_ext}. The proof of Theorem~\ref{th:hyp_ext} is presented in Sec.~4. 

\section{Preliminaries}

\subsection{Extreme and exposed points criteria}

Assume that $g$  is one of the generators specified in~\eqref{eq:gauss_secant_def}.
Our analysis of the geometry of the unit ball of the (quasi) shift-invariant spaces $V^1_\Gamma(g)$
 will rely on the following two standard lemmas.
\begin{lemma}[Extreme point criterion]\label{h_lemma}
Suppose that $f \in V^1_{\Gamma}(g)$ and $\|f\|_{1} = 1$.
Then $f\in \ext(V^1_{\Gamma}(g))$ if and only if there is no real, non-constant, bounded  function $\tau$ on $\R$ satisfying $f \tau \in V^1_{\Gamma}(g)$.
\end{lemma}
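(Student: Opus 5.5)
We prove the two implications separately. Both rest on the elementary equality case of the triangle inequality in $L^1$. Throughout, the functions are complex-valued, so the space is a complex Banach space and $\tau$ real means $\tau(x)\in\R$ for all $x$.

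\emph{Step 1: such a $\tau$ prevents $f$ from being extreme.} Suppose $\tau$ is real, bounded and non-constant with $f\tau\in V^1_\Gamma(g)$. Replacing $\tau$ by $\tau - t_0$ for a suitable real constant $t_0$ keeps $f\tau\in V^1_\Gamma(g)$, because $f$ itself lies in the space. Rescaling $\tau$ by a positive constant, we may then assume $\|\tau\|_\infty<1$ and $\int_\R \tau|f|\,dx=0$. The second normalization is possible since $s\mapsto\int(\tau-s)|f|$ is affine with slope $-\|f\|_1=-1\neq 0$.

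Set $f_\pm=f(1\pm\tau)\in V^1_\Gamma(g)$. Because $1\pm\tau>0$,
$$\|f_\pm\|_1=\int|f|(1\pm\tau)\,dx=1\pm\int\tau|f|\,dx=1,$$
and $f=\tfrac12(f_++f_-)$. It remains to check $f_+\neq f_-$, i.e.\ $f\tau\not\equiv0$. Elements of $V^1_\Gamma(g)$ extend analytically (entire for $G_a$, meromorphic for $H_a$), so $f\not\equiv 0$ has only a discrete set of real zeros. Hence $f\tau=0$ a.e.\ would force $\tau=0$ a.e. If $\tau$ is only required to be non-constant in the a.e.\ sense, this contradicts non-constancy. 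We should read ``non-constant'' as ``not a.e.\ constant''; a function differing from a constant only on a null set gives the same element of $L^1$.

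\emph{Step 2: a non-extreme $f$ produces such a $\tau$.} Suppose $f=\alpha f_1+(1-\alpha)f_2$ with $f_1\neq f_2$ in the unit ball and $0<\alpha<1$. Write $f=\tfrac12(h_1+h_2)$ with $h_1=f+\delta(f_1-f_2)$ and $h_2=f-\delta(f_1-f_2)$, where $\delta=\min(\alpha,1-\alpha)>0$. Both $h_j$ lie in the unit ball, being convex combinations along the segment. Then
$$1=\|f\|_1\le\tfrac12(\|h_1\|_1+\|h_2\|_1)\le1,$$
so equality holds in $\int|h_1+h_2|\le\int|h_1|+\int|h_2|$. This forces $h_1\overline{h_2}\ge0$ a.e.

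On the set where $f\neq0$ (the complement of a discrete set), $h_1$ and $h_2$ therefore point in the same direction as $f$. Write $h_1=f(1+\tau)$ with $\tau=(h_1-f)/f$. Then $1+\tau$ is a nonnegative real multiple of direction, and $1-\tau=h_2/f$ likewise, so $\tau$ is real with $-1\le\tau\le1$. Thus $\tau$ is real and bounded, and $f\tau=\delta(f_1-f_2)\in V^1_\Gamma(g)$. Define $\tau$ arbitrarily (say $0$) on the null set of zeros of $f$.

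Finally, $\tau$ is non-constant. If $\tau\equiv c$, then $f_1-f_2=(c/\delta)f$. Taking norms of $h_{1,2}=(1\pm c)f$ gives $|1+c|\le1$ and $|1-c|\le1$ with $c$ real, so $c=0$, contradicting $f_1\neq f_2$.

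\emph{Main point of care.} The only delicate issue is the a.e.\ versus pointwise meaning of ``non-constant''. It is resolved by the discreteness of the real zeros of $f$, which follows from the analyticity of elements of $V^1_\Gamma(g)$. The key inequality is just the pointwise equality case $|z+w|=|z|+|w|$, which holds iff $z\bar w\ge0$.
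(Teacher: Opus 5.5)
Your proof is correct and is the standard de Leeuw--Rudin-type argument: the equality case of the triangle inequality in $L^1$ for one direction, and the perturbations $f(1\pm\tau)$ for the other. The paper does not write this argument out; it defers to the Hardy and Paley--Wiener settings. Your point that ``non-constant'' must mean ``not a.e.\ constant'', with analyticity ensuring $f$ has only discrete real zeros, is a correct and needed clarification: under a literal pointwise reading the statement would fail.
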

\begin{lemma}[Exposed point criterion]\label{exp_lemma}
A function  $f\in{\rm \expo}(V^1_{\Gamma}(g))$ if and only if  $f\in{\rm \ext}(V^1_{\Gamma}(g))$ and there is no real, non-negative,  non-constant function $h$ on $\R$ satisfying $fh\in V^1_{\Gamma}(g)$.
\end{lemma}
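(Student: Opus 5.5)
The statement to prove is Lemma~\ref{exp_lemma}, the exposed point criterion; I use Lemma~\ref{h_lemma} freely. The tool is the duality $(L^1)^\ast=L^\infty$ together with Hahn--Banach. A functional $\varphi$ on $V^1_\Gamma(g)$ exposes $f$ exactly when it norms $f$ and attains its norm only at $f$. After extending by Hahn--Banach, $\varphi$ is given by some $\psi\in L^\infty(\R)$ with $\|\psi\|_\infty=1$. Normalise so that $\varphi(f)=\|f\|_1=1$ (taking real parts in the complex case). Then $\operatorname{Re}\int \psi f=\int|f|$ forces $\psi=\overline{\operatorname{sgn} f}$ a.e.\ on $\{f\neq 0\}$. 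Since every nonzero element of $V^1_\Gamma(g)$ is real-analytic, the zero set of $f$ on $\R$ is discrete, so $\psi$ is determined a.e. Hence the only candidate exposing functional is $\varphi_f(u)=\operatorname{Re}\int \overline{\operatorname{sgn} f}\,u$, and $f$ is exposed iff $\varphi_f(u)=1$ with $\|u\|_1\le 1$ forces $u=f$.

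Now analyse the set $F=\{u\in B(V^1_\Gamma(g)) : \varphi_f(u)=1\}$. For $u\in F$ we have $1=\operatorname{Re}\int\overline{\operatorname{sgn} f}\,u\le\int|u|\le 1$, so equality holds pointwise a.e. This gives $u=|u|\operatorname{sgn} f$ on $\{f\ne0\}$, i.e.\ $u=fh$ with $h=|u|/|f|\ge 0$ measurable and $\int|f|h=1$. Conversely, if $h\ge0$, $fh\in V^1_\Gamma(g)$ and $h\not\equiv0$, then $u=fh/\|fh\|_1$ lies in $F$.

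With this, both directions follow.

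\emph{Failure of exposedness.} If some non-negative non-constant $h$ has $fh\in V^1_\Gamma(g)$, then $fh/\|fh\|_1\in F$. This element differs from $f$, since otherwise $h$ would be constant a.e. So $\varphi_f$ does not expose $f$. As $\varphi_f$ was the only candidate, $f$ is not exposed. Of course, if $f\notin\ext$, then $f$ is not exposed either.

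\emph{Exposedness.} Suppose $f\in\ext$ and no such $h$ exists. Then every $u\in F$ equals $fh$ with $h\ge 0$, and $h$ must be constant. The normalisation $\int|f|h=1$ forces $h\equiv1$, so $u=f$ and $\varphi_f$ exposes $f$. Extremality is not even needed for this implication; it is listed only because $\expo\subset\ext$.

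The main obstacle is ensuring that the candidate $\psi$ is uniquely determined a.e. This rests on the zero set of $f$ being null. That holds because $f$ extends analytically, to an entire function for $g=G_a$ and to a meromorphic function without real poles for $g=H_a$, and $f\not\equiv0$. A further point to handle is that "non-constant" means non-constant a.e.: values of $h$ on the null set $\{f=0\}$ are irrelevant, and should be fixed by an arbitrary convention.
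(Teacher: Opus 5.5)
Your proof is correct. It is the standard duality argument: $\overline{\operatorname{sgn} f}$ is the unique norming functional because the real zeros of $f$ form a null set, and its face in $B(V^1_\Gamma(g))$ consists of the normalised functions $fh$ with $h\ge 0$. The paper does not write this argument out; it invokes it by reference to the $H^1$ and Paley--Wiener literature. One small correction: in the sufficiency direction you do use $\|f\|_1=1$, so your remark that extremality ``is not even needed'' should read that the hypothesis $f\in\ext(V^1_\Gamma(g))$ is needed at least to supply this normalisation. Indeed, if $f$ is exposed then $f/2$ satisfies the $h$-condition but is not exposed.
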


These lemmas are well-known in the settings of the Hardy space $H^1$ and the Paley-Wiener spaces (see \cite{MR1783617, MR410387}). The same arguments apply, without essential modification, to the (quasi) shift-invariant spaces $V^1_{\Gamma}(g)$ generated by Gaussian kernel or hyperbolic secant.

\subsection{A corollary of Hayman's theorem}
Given an entire function $h$, set
\begin{equation}\label{eq:m_M_def}
M(r,h)=\sup_{|z|\leq r}|h(z)|,\ \ m(r,h)=\inf_{|z|=r}|h(z)|.  
\end{equation}

If $h(z)=a_0+a_1z+...+a_nz^n$ is a polynomial, then clearly, for every positive $\varepsilon$ there exists $r_\varepsilon>0$ such that $m(r,h)>(1-\varepsilon)M(r,h)$ for all $r>r_\varepsilon.$ This is no longer true for transcendental entire functions.  However, classical results—most notably the 
$\cos\pi\lambda$-theorem — show that for functions of “slow” growth, the quantity 
$m(r,h)$ cannot be too small relative to 
$M(r,h)$. 

 The following lemma is an immediate corollary of Hayman's classical result (see Theorem 1 and Corollary 1 in  \cite{MR111839}):
\begin{lemma}\label{hayman}
    Assume $\psi$ is an entire function satisfying
    \begin{equation}\label{mrh}
        \log M(r,\psi)\leq C\log^2 r, \ r\geq 2, \quad \text{for some constant } C.
    \end{equation}
    Then, 
\begin{enumerate}
    \item[$(i)$]     
    for almost every $\theta\in [0,2\pi)$ there exists $r_\theta>0$ such that
\begin{equation}\label{h1}
\log|\psi(re^{i\theta})|\geq\frac{\log M(r,\psi)}{2},\quad r\geq r_\theta;
\end{equation}
    \item[$(ii)$] there is a set $E\subset (1,\infty)$ satisfying $\int_E (1/t)\,dt<\infty$ and such that \begin{equation}\label{mrh1}
    \log m(r,\psi)\geq \frac{\log M(r,\psi)}{2},\quad r\in (1,\infty)\setminus E.
\end{equation}
\end{enumerate}
\end{lemma}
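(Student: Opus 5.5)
The plan is to derive both parts from Hayman's theorem \cite[Theorem~1 and Corollary~1]{MR111839}, after disposing of a trivial case. If $\psi$ is constant, \eqref{h1} and \eqref{mrh1} are immediate as soon as $\psi\neq 0$, since both sides are then equal constants and $\log|c|\ge \tfrac12\log|c|$ for $|c|\ge 1$ after rescaling. If $\psi$ is a nonconstant polynomial, the remark before the lemma gives $m(r,\psi)\sim M(r,\psi)$, and $|\psi(re^{i\theta})|\sim M(r,\psi)$ uniformly in $\theta$, so both parts hold with $E$ bounded. We may therefore assume $\psi$ is transcendental, so that $\log M(r,\psi)\to\infty$.

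\emph{Hayman's theorem in the form used.} Hayman's Theorem~1 states that under \eqref{mrh} there is an exceptional set of discs $D_n=\{|z-z_n|<\rho_n\}$ with
$$\sum_n \frac{\rho_n}{|z_n|}<\infty$$
such that
$$\log|\psi(z)|=(1+o(1))\log M(|z|,\psi)\quad\text{as } z\to\infty,\ z\notin\bigcup_n D_n.$$
His Corollary~1 states that
$$\log m(r,\psi)=(1+o(1))\log M(r,\psi)$$
as $r\to\infty$ outside a set $E_0$ of finite logarithmic measure.

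\emph{Part (ii).} This follows from Corollary~1. Choose $r_0$ so that, for $r>r_0$ with $r\notin E_0$, the factor $1+o(1)$ exceeds $1/2$; this uses that $\log M(r,\psi)>0$ for large $r$. Then set $E=E_0\cup(1,r_0]$. Adding the bounded interval $(1,r_0]$ keeps $\int_E dt/t<\infty$, and \eqref{mrh1} holds on $(1,\infty)\setminus E$.

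\emph{Part (i).} This comes from the disc statement by a Borel--Cantelli argument on angles. We may assume $|z_n|>2\rho_n$ for all $n$; discs violating this are finitely many, since $\rho_n/|z_n|\to0$, and they can be absorbed into a bounded set.
\begin{enumerate}
\item A ray $\{re^{i\theta}\}$ meets $D_n$ only if $\theta$ lies in an arc $I_n$ centred at $\arg z_n$ of length
$$2\arcsin(\rho_n/|z_n|)\le \pi\rho_n/|z_n|.$$
\item Let $\Theta$ be the set of $\theta$ belonging to infinitely many $I_n$. Then $|\Theta|\le\sum_{n\ge N}|I_n|\to0$ as $N\to\infty$, so $\Theta$ has measure zero.
\item For $\theta\notin\Theta$, the ray meets only finitely many discs, and each of them is bounded. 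Hence there is $R_\theta$ with $re^{i\theta}\notin\bigcup_n D_n$ for $r>R_\theta$.
\item Hayman's asymptotic along this ray then gives $\log|\psi(re^{i\theta})|\ge\frac12\log M(r,\psi)$ for all $r\ge r_\theta$, for a suitable $r_\theta\ge R_\theta$.
\end{enumerate}

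The only delicate point is matching the precise form of Hayman's exceptional set. One needs the sum of the angular sizes $\rho_n/|z_n|$ to be finite, rather than merely some other density condition. If Hayman's statement instead gives a set of zero logarithmic density in a different normalization, the same covering argument still applies once it is expressed as a countable union of discs with summable angular widths.
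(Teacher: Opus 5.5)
Your route is the paper's. The paper just cites Hayman's Theorem~1 and Corollary~1, and your argument correctly supplies the step it leaves implicit. Hayman's exceptional discs subtend angles at the origin with finite sum, so your Borel--Cantelli argument on the arcs $I_n$ gives (i). Part (ii) is Corollary~1 after adjoining a bounded interval $(1,r_0]$ to the exceptional set.

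\textbf{The constant case is wrong.} If $\psi\equiv c$ with $0<|c|<1$, then $\log|c|<\tfrac12\log|c|$. So \eqref{h1} fails for every $\theta$, and \eqref{mrh1} fails for every $r>1$, which would force $E=(1,\infty)$. You cannot ``rescale'', because $\psi$ is given. The lemma as stated is therefore false for such constants. It must be read for nonconstant $\psi$ (or constants with $|c|\ge 1$), and that is how the paper uses it: Lemma~\ref{c24} treats $\psi_2^+\equiv\mathrm{const}$ separately, and Section~\ref{34} applies the lemma to a transcendental $\psi^+$. You should state this restriction explicitly rather than claim the constant case is immediate.

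\textbf{Your closing fallback is vacuous.} An exceptional set that is only of zero density need not be coverable by discs with summable angular widths. For example, $\sum_{|z_n|<R}\rho_n=o(R)$ is compatible with $\sum_n\rho_n/|z_n|=\infty$, and then almost every ray may meet infinitely many discs. So (i) genuinely depends on the summable-angle form of Hayman's Theorem~1, which fortunately is the form he proves.
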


\begin{lemma}
    Let $\varphi$ be an entire $\pi i/a$-periodic function. Then 
    $$\varphi(z)=\sum_{n\in\Z}a_ne^{2az},\quad z\in\mathbb C,$$where the series converges uniformly in any strip $-\infty<A\leq\,${\rm Re}$\,z\leq B<\infty$.
\end{lemma}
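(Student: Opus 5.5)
We read the intended expansion as $\varphi(z)=\sum_{n\in\Z}a_ne^{2anz}$: the exponent must carry the index $n$, since otherwise the right-hand side is just a constant times $e^{2az}$. The plan is to pass to the variable $w=e^{2az}$, in which periodicity becomes single-valuedness on the punctured plane, and then use the Laurent expansion there.

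\emph{Step 1: descend to the punctured plane.} The map $z\mapsto w=e^{2az}$ sends $\mathbb C$ onto $\mathbb C\setminus\{0\}$. Two points $z,z'$ have the same image if and only if $z'-z\in \frac{\pi i}{a}\Z$, because $e^{2a\cdot \pi i k/a}=e^{2\pi i k}=1$. Since $\varphi$ is $\pi i/a$-periodic, the function
$$F(w):=\varphi(z),\qquad e^{2az}=w,$$
is well defined on $\mathbb C\setminus\{0\}$. It is holomorphic there. Indeed, near any $w_0\neq 0$ choose a local holomorphic branch $L(w)$ of $\frac{1}{2a}\log w$. Then $F=\varphi\circ L$ near $w_0$, which is a composition of holomorphic maps.

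\emph{Step 2: Laurent expansion.} A function holomorphic in the annulus $0<|w|<\infty$ has a Laurent expansion
$$F(w)=\sum_{n\in\Z}a_nw^n,\qquad a_n=\frac{1}{2\pi i}\int_{|w|=\rho}\frac{F(w)}{w^{n+1}}\,dw \quad (\rho>0).$$
The series converges absolutely and uniformly on every compact sub-annulus $r_1\le |w|\le r_2$ with $0<r_1<r_2<\infty$.

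\emph{Step 3: return to $z$.} Substituting $w=e^{2az}$ gives $\varphi(z)=\sum_{n\in\Z} a_n e^{2anz}$ for all $z\in\mathbb C$. The strip $A\le {\rm Re}\, z\le B$ is mapped by $z\mapsto e^{2az}$ onto the closed annulus $e^{2aA}\le |w|\le e^{2aB}$, because $|e^{2az}|=e^{2a{\rm Re}\,z}$. This annulus is compact and does not contain $0$. Uniform convergence on the annulus therefore transfers directly to uniform convergence of the series on the strip.

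There is no genuine obstacle here. The only point needing care is the well-definedness and holomorphy of $F$ in Step 1, and that is handled by periodicity together with local branches of the logarithm.
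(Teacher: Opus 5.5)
Your proof is correct and follows the paper's argument: the substitution $w=e^{2az}$ turns $\pi i/a$-periodicity into single-valuedness of $\varphi\bigl(\frac{\log w}{2a}\bigr)$ on $\mathbb C\setminus\{0\}$, and uniform convergence of the Laurent series on the annuli $e^{2aA}\le|w|\le e^{2aB}$ gives uniform convergence on the strip. Your reading of the exponent as $e^{2anz}$ is also the intended one, since the statement has a typo there.
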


Indeed, set $w:=e^{2za}$, so that $z=\log w/(2a)$ and ${\rm Re \,}z=\log |w|/(2a)$. Since $\varphi$ is entire and $\pi i /a$-periodic, it is clear that the function $\psi(w):=\varphi(\frac{\log w}{2a})$ is analytic in $\mathbb C\setminus\{0\}$. Hence,
 $$\psi(w)=\sum_{k=-\infty}^\infty a_k w^k,$$ and the series converges uniformly in any ring $0<r<|w|<R<\infty,$ which proves the lemma.      

\begin{lemma}\label{c24}
    Assume that $\varphi=\varphi_1/\varphi_2$ is an entire function where  $\varphi_1$ and $\varphi_2$ are  entire $\pi i/a$-periodic functions satisfying the estimate \begin{equation}\label{eest}
    |\varphi_j(x+iy)|\leq Ce^{K x^2}, \ j=1,2, \quad \text{for all } x,y\in\R \text{ and  some } C,K>0.\end{equation}
    Then  $\varphi$ also satisfies \eqref{eest} with some $C,K>0$.
    \end{lemma}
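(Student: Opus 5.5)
Pass to the variable $w=e^{2az}$ and use Lemma~\ref{hayman}. Since $\varphi_j$ is entire and $\pi i/a$-periodic, the preceding lemma shows that $\psi_j(w):=\varphi_j(\frac{\log w}{2a})$ is analytic in $\Cc\setminus\{0\}$. The quotient $\psi:=\psi_1/\psi_2=\varphi(\frac{\log w}{2a})$ is also analytic in $\Cc\setminus\{0\}$. The reason is that $\varphi$ is entire, and it is $\pi i/a$-periodic wherever $\varphi_2\ne0$, hence everywhere by the identity theorem ($\varphi_2\not\equiv 0$). In the new variable, with $|w|=r$ and $x=\log r/(2a)$, estimate \eqref{eest} reads
\[
|\psi_j(w)|\le C\exp\Big(\tfrac{K}{4a^2}\log^2 |w|\Big),\qquad w\in\Cc\setminus\{0\}.
\]
So the task is to show that $\psi$ obeys the same type of bound, i.e. $\log|\psi(w)|\le C'+K'\log^2|w|$, both as $|w|\to\infty$ and as $|w|\to 0$. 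The case $|w|\to0$ follows from the case $|w|\to\infty$ applied to $w\mapsto 1/w$, which corresponds to $x\mapsto -x$.

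The main obstacle is that $\psi_j$ is only analytic in the punctured plane, whereas Lemma~\ref{hayman} is stated for entire functions. I would first remove the essential behaviour at $0$ with a Laurent splitting: $\psi_2=P_2+N_2$, where $P_2$ is entire (nonnegative powers) and $N_2$ is analytic at $\infty$ and vanishes there. Cauchy estimates on circles $|w|=r$ transfer the $\log^2$ growth to each Laurent coefficient, so both $P_2$ and $N_2$ satisfy \eqref{mrh}. Since $N_2$ is bounded for $|w|\ge1$, $P_2$ satisfies \eqref{mrh}. We may assume $P_2$ is not a polynomial; the polynomial case is elementary, since then $|\psi_2|\ge c|w|^{m}$ for large $|w|$. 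Now apply Lemma~\ref{hayman}(ii) to $P_2$. For $r\notin E$ this gives $\log m(r,P_2)\ge \frac12\log M(r,P_2)\to\infty$, which dominates the bounded term $N_2$. Hence $|\psi_2(w)|\ge \frac12 m(r,P_2)\ge 1$ on $|w|=r$ for large $r\notin E$.

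On such circles,
\[
|\psi(w)|\le |\psi_1(w)|\le C\exp(K''\log^2 r).
\]
To pass to all large $r$, use that $\int_E dt/t<\infty$. Then for every large $R$ there is $r\in[R,2R]\setminus E$; otherwise $E$ would contain infinitely many disjoint intervals $[R,2R]$, each of logarithmic measure $\log 2$. The maximum principle on annuli, applied to $\psi$ (analytic in $\Cc\setminus\{0\}$) between two good radii $r_1<R<r_2$ with $r_2\le 4R$, gives
\[
\log|\psi(w)|\le C+K''\log^2(4|w|)\le C'+2K''\log^2|w|,\qquad |w|=R.
\]
The same argument with $w\mapsto1/w$ handles $|w|\to0$. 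On the compact set $\{\rho\le|w|\le \rho^{-1}\}$, $\psi$ is bounded. Returning to $z$ gives $|\varphi(x+iy)|\le C e^{K x^2}$, which is \eqref{eest} for $\varphi$.
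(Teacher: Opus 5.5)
Your proof is correct. Its first half follows the paper: the substitution $w=e^{2az}$, the Laurent splitting of $\psi_2$, an elementary treatment of the degenerate case, and Lemma~\ref{hayman}(ii) applied to the entire part of $\psi_2$ to get $|\psi_2|\ge 1$ on circles $|w|=r$ with $r\notin E$.

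The two routes differ in how the bound on these good circles is spread to all $w$.

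\textbf{The paper's route.} It uses from part (ii) only that good radii contain a sequence $r_k\to\infty$ (and symmetrically one tending to $0$). This gives \eqref{eest} for $\varphi$ on vertical lines $x=x_k\to+\infty$ and $x=\tilde x_k\to-\infty$. It then uses Lemma~\ref{hayman}(i) to get \eqref{eest} on a horizontal line and its $\pi i/a$-translates. Finally it applies the maximum principle to $e^{-Kz^2}\varphi$ on rectangles in the $z$-plane. The weight has harmonic log-modulus $-K(x^2-y^2)$, so the boundary bound is uniform and the gaps between consecutive $x_k$ may be arbitrary. But $e^{-Kz^2}$ is not $\pi i/a$-periodic, so the argument cannot be run on an annulus. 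That is what forces the horizontal sides, and hence part (i).

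\textbf{Your route.} You use the finite logarithmic measure of $E$ to place a good circle in every range $[R,2R]$. With both boundary radii within a bounded factor of $R$, the bounds $C\exp(K''\log^2 r)$ on the two boundary circles of the annulus are at most $C'\exp(2K''\log^2 R)$. So the unweighted maximum principle in the $w$-plane finishes the proof, and part (i) is never used.

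\textbf{Comparison.} Your argument is shorter and needs one ingredient fewer. The paper's argument needs from part (ii) only an unbounded sequence of good radii, at the price of also needing part (i).

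\textbf{Minor points.} $N_2$ is not entire, so only the bound for $P_2$ is meaningful in \eqref{mrh}. In your polynomial case the exponent $m$ must be allowed to be negative (when $P_2\equiv 0$), exactly like the paper's $n_0\le 0$.
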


Although this lemma is a particular case of \cite[Theorem 1 in Lecture 2]{MR1400006}, we present an alternative proof based on Lemma \ref{hayman}, which we will refer to several times later.

\begin{proof}
 Consider the functions 
 $$\psi_j(w):=\varphi_j\left(\frac{\log w}{2a}\right)=\sum_{k=-\infty}^\infty a_k^{(j)} w^k,\ j=1,2.$$  

 Write $$\psi_j(w)=\sum_{k\geq 0}a_k^{(j)} w^k+\sum_{k<0}a_k^{(j)}w^k=:\psi_j^+(w)+\psi_j^-(w).$$ 
 Clearly,  both functions $\psi_j^-(w),j=1,2,$  are bounded for $|w|> 1.$ Hence, by \eqref{eest}, $\psi_j^+$ satisfy \eqref{mrh}, $j=1,2$. 

  Assume  $\psi_2^+ \equiv const$, so that  $n_0:=\max\{k\in\mathbb Z: a_k^{(2)}\ne0\}\leq0$. Then for all $w$ with sufficiently large absolute value, 
 $$|\psi_2(w)| \geq |w|^{n_0}\left(|a^{(2)}_{n_0}| - \left|\sum\limits_{n < n_0} a^{(2)}_n w^{n-n_0} \right|\right) \geq C |w|^{n_0}.$$ 

  Assume now that $\psi_2^+$ is not a constant function.  
By Lemma~\ref{hayman} (ii), there is a sequence $r_k\to\infty$ such that
$$
\log m(r_k,\psi^+_2) > \frac{1}{2}\log M(r_k, \psi^{+}_2)\to\infty,\quad k\to\infty,$$
where $m$ and $M$ are defined in~\eqref{eq:m_M_def}. Hence, $|\psi_2(w)|>C, |w|=r_k,k\in\N.$

The above estimates show  that there is a sequence $r_k\to\infty$ such that \begin{equation}\label{ppsi}
    |\psi_2(w)|\geq C|w|^{n_0}\quad \text{for some } n_0\leq 0 \, \text{ and } \, |w|=r_k\to\infty.
\end{equation}Hence,
\begin{equation}\label{pvsi}
    |\varphi_2(z)|\geq Ce^{2an_0x}\quad \text{for some } n_0\leq0 \, \text{ and } \, x=x_k\to\infty.
\end{equation}

Using \eqref{pvsi} and estimate  \eqref{eest} for $\varphi_1$, we conclude that  $\varphi(z)$ satisfies \eqref{eest}  for all $z = x + i y$, where $x=x_k:=(\log r_k)/(2a) \to+\infty$.

A similar approach shows that $\varphi(z)$ satisfies \eqref{eest} for all $z=x+iy, $ where $x=\tilde x_k\to-\infty.$

We now apply Lemma \ref{hayman} (i) to functions $\psi^{\pm}_1$ to find   numbers $ \theta\in [0,2\pi)$ and $R=R(\theta)>0$ such that both functions $\psi^{+}_1(re^{i\theta})$ and
$\psi^{-}_1((1/r)e^{-i\theta})$
satisfy \eqref{h1} for $r>R$. Similarly to above, we see that $\varphi(z)$ satisfies \eqref{eest}  for all $z=x+iy_0, y_0:=\theta/(2a)$.
Since $\varphi$ is entire and $\pi i/a$-periodic, we have the same estimate for every $y = y_0+\pi  k/a, k\in\Z$.
Therefore, we may find $y_1\leq0$ and $y_2\geq \pi/a$ and a large positive number $K$ such that the function  $e^{-K z^2}\varphi(z)$ is bounded on the boundary of the rectangle $$P(k):=\{z=x+iy:\,\, \tilde x_k\leq x\leq x_k,\, y_1\leq y\leq y_2,\,\,\, k\in\N\}.$$
Hence,
$$|\varphi(z)|\leq Ce^{K x^2},\quad z=x+iy\in P(k),$$ where $C,K$ do not depend on $k.$ Letting  $k\to\infty$ and using the $\pi i/a$-periodicity of $\varphi$, we see that $\varphi$  satisfies
\eqref{eest}. 
\end{proof}
 
 We conclude this section with two technical remarks.
\begin{enumerate}
    \item[(1)] The arguments below do not depend on the value of the shape parameter $a > 0$ in the functions $H_a$ and $G_a$. For simplicity of presentation, we retain $a$ in the  statements, but set $a=1$ at the beginning of the proofs. We also set $G:=G_1$ and $H:=H_1$. 
    \item[(2)]Throughout the paper, the symbol $C$ denotes a positive constant whose value may change from line to line. 
\end{enumerate}

\section{Gaussian kernel: Proof of Theorem~\ref{th:gauss_ext}}

\subsection{Auxiliary lemmas}

Recall that every element $f\in V^\infty(G_a)$ is an entire function of the form
\begin{equation}\label{f}f(z)=\sum_{n\in\Z}c_ne^{-a(z-n)^2},\quad \{c_n\}\in l^\infty(\Z).\end{equation}This yields:
 \begin{equation}\label{order_2}
        |f(z)|\leq \sup_{n\in\Z}|c_n|\sum_{n\in\Z}\left|e^{-a(z-n)^2}\right|\leq Ce^{ay^2},\quad z=x+iy\in\mathbb{C}.
    \end{equation}

We start with the following characterization of the functions from the space $V^{\infty}(G_a).$ 
\begin{lemma}\label{gauss_char}
Let $a>0$. The following statements are equivalent.

    \item[$(i)$] $f\in V^\infty(G_a);$
    \item[$(ii)$] $f(z)=e^{-az^2}\varphi(z)$, where $\varphi(z)$ is a $\pi i/a$-periodic entire function satisfying  
\begin{equation}\label{ff}
    |\varphi(z)| \le C e^{ax^2}, \quad z = x + iy \in \Cc.
   \end{equation}

\end{lemma}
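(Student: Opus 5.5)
We prove the two directions separately. Throughout we set $a=1$, as the paper allows.

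\emph{(i)$\Rightarrow$(ii).} Let $f(z)=\sum_n c_n e^{-(z-n)^2}$ with $\{c_n\}\in l^\infty$, and define $\varphi(z):=e^{z^2}f(z)$. Expanding the square gives
$$\varphi(z)=\sum_{n\in\Z}c_n e^{-n^2}e^{2nz}.$$
\begin{itemize}
\item The series converges locally uniformly, since $e^{-n^2}$ dominates $e^{2n|z|}$. Hence $\varphi$ is entire.
\item Each term is $\pi i$-periodic, because $e^{2n(z+\pi i)}=e^{2nz}$. Hence $\varphi$ is $\pi i$-periodic.
\item For the growth bound, note $|e^{2nz}|=e^{2nx}$. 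Then
$$|\varphi(z)|\le \|c\|_\infty\sum_n e^{-n^2+2nx}=\|c\|_\infty e^{x^2}\sum_n e^{-(n-x)^2}\le Ce^{x^2},$$
because $\sum_n e^{-(n-x)^2}$ is bounded uniformly in $x$. This is \eqref{ff}.
\end{itemize}

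\emph{(ii)$\Rightarrow$(i).} Let $\varphi$ be entire, $\pi i$-periodic, and satisfy \eqref{ff}.
\begin{itemize}
\item By the Laurent-series lemma of Section 2, $\varphi(z)=\sum_n a_n e^{2nz}$, with uniform convergence on vertical strips. Equivalently, $\psi(w)=\varphi(\log w/2)=\sum a_n w^n$ is analytic in $\Cc\setminus\{0\}$.
\item The coefficients come from Cauchy's formula on the circle $|w|=e^{2x}$, i.e. integrating over one period in $y$:
$$a_n=\frac{1}{\pi}\int_0^{\pi}\varphi(x+iy)e^{-2n(x+iy)}\,dy.$$
This holds for every real $x$.
\item By \eqref{ff}, $|a_n|\le C e^{x^2-2nx}$ for every $x$. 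Choosing $x=n$ minimizes the exponent and gives $|a_n|\le Ce^{-n^2}$.
\item Set $c_n:=a_ne^{n^2}$, so $\{c_n\}\in l^\infty$. Then
$$e^{-z^2}\varphi(z)=\sum_n c_n e^{-n^2+2nz-z^2}=\sum_n c_n e^{-(z-n)^2},$$
so $f\in V^\infty(G_1)$.
\end{itemize}

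The only step that needs care is the coefficient estimate. The key is that we may move the contour freely in $x$, and the optimal choice $x=n$ exactly cancels the Gaussian factor. Everything else is routine convergence bookkeeping. The constant $K=a$ in \eqref{ff} matters here: with $e^{Kx^2}$ for $K>1$ we would only get $|a_n|\le Ce^{-n^2/K}$, which is too weak for $\{c_n\}$ to be bounded.

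For general $a$, the same argument goes through with $e^{2anz}$, period $\pi i/a$, and the choice $x=n$.
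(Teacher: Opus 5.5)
Your proof is correct and follows essentially the same route as the paper: you take $\varphi=e^{z^2}f$ for (i)$\Rightarrow$(ii), and for the converse you apply a Cauchy estimate to the Laurent coefficients of $\psi(w)=\varphi(\log w/2)$ on the circle $|w|=e^{2n}$ (your choice $x=n$), which gives $|a_n|\le Ce^{-n^2}$ and hence bounded $c_n=a_ne^{n^2}$. Writing the coefficient formula directly over one period in $y$ handles all $n\in\Z$ at once, which is slightly cleaner than the paper's separate treatment of $n\ge 0$ and $n<0$.
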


\begin{proof}
Assume $a=1.$

    (i) $\Rightarrow$ (ii). 
    Let $f\in V^\infty(G)$. Set
    $$
    \varphi(z): = e^{z^2}f(z)=e^{z^2} \sum \limits_{n \in \Z} c_n e^{-(z-n)^2} = \sum \limits_{n \in \Z} e^{2 z n} c_n e^{-n^2}.
    $$Clearly, $\varphi$ is $\pi i$-periodic, and by  \eqref{order_2} it satisfies \eqref{ff}.

   (ii) $\Rightarrow$ (i). As in the proof of Lemma \ref{c24}, we use the change of variables $w=e^{2z},$ and set $$\psi(w):=\varphi\left(\frac{\log w}{2}\right)=\sum_{k\in\Z}a_n w^n.$$    
By \eqref{ff} (with $a=1$), 
\begin{equation}\label{psi_e1}|\psi(w)|\leq C e^{(\log |w|)^2/4},\quad w\ne0. \end{equation}

Write $\psi=\psi_++\psi_-,$ where $\psi_\pm$ are defined in the proof  of Lemma \ref{c24}.
Note that $\psi_+$ satisfies estimate (\ref{psi_e1}) for all $|w|\geq 1$.
    Choosing $R=e^{2n}$ in Cauchy's inequality, we get
    $$|a_n|\leq \frac{M(R,\psi)}{R^n}\leq C\exp\left(\frac{\log^2 R}{4}-n\log R\right)=Ce^{-n^2}$$
     for all $n\geq0$.
    The same estimate holds for every $n<0.$ This means that the numbers $c_n:=a_ne^{n^2}$ are bounded, and so
the function
    $$f(z):=e^{-z^2}\psi(e^{2z})=\sum_{n=-\infty}^\infty c_ne^{-(z-n)^2}$$
    belongs to $V^\infty(G).$ This finishes the proof of the lemma.
\end{proof}

\begin{lemma}\label{lgf}
    Assume $f\in V^\infty(G_a)\cap L^1(\R)$. Then $f\in V^1(G_a).$
\end{lemma}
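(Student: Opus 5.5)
Set $a=1$ and write $f(x)=\sum_{n\in\Z}c_n e^{-(x-n)^2}$ with $\{c_n\}\in l^\infty(\Z)$. Since the representation is unique and the norm equivalence \eqref{eq:norm_equiv} holds for $V^1(G)$, we must show $\sum_n|c_n|<\infty$. The plan is to recover each $c_n$ from integrals of $f$ against exponential weights. This uses the periodic structure of Lemma \ref{gauss_char}: $\varphi(z)=e^{z^2}f(z)=\sum_n c_ne^{-n^2}e^{2zn}$ is a Fourier series in the imaginary direction.

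\emph{Step 1 (coefficient formula).} For fixed $x\in\R$, integrate $\varphi(x+iy)e^{-2n(x+iy)}$ over $y\in[0,\pi]$. Uniform convergence in vertical strips (the Laurent expansion lemma) gives
$$c_n e^{-n^2}=\frac1\pi\int_0^\pi \varphi(x+iy)e^{-2n(x+iy)}\,dy .$$
Multiply by a weight and integrate in $x$, choosing the weight $e^{-(x-n)^2}$ suitably shifted, to rewrite this as an integral over the strip $S=\{0\le y\le\pi\}$:
$$c_n=C_0\iint_S f(z)\,e^{z^2-2nz+n^2}\,e^{-(x-n)^2}\,\omega(x)\,dx\,dy,$$
with a fixed normalisation. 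Since $|e^{z^2-2nz+n^2}|=e^{(x-n)^2-y^2}$, the integrand is bounded by $|f(x+iy)|$ times a bounded factor. Hence
$$|c_n|\le C\int_0^\pi\int_{n-1}^{n+1}|f(x+iy)|\,dx\,dy$$
after localising $x$ near $n$, or more crudely with a Gaussian factor that is summable over $n$.

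\emph{Step 2 (integrability on horizontal lines).} It then suffices to show $\int_\R|f(x+iy)|\,dx\le C\|f\|_{L^1(\R)}$ uniformly for $0\le y\le\pi$. I will use Phragmén–Lindelöf in the strip. The function $f$ is bounded on $\R$ and on each line $y=\mathrm{const}$ by \eqref{order_2}, with growth at most $e^{y^2}$. By periodicity of $\varphi$, $f(x+i\pi)=e^{-(x+i\pi)^2}\varphi(x)=e^{\pi^2-2\pi i x}f(x)$, so $f$ on the top line has modulus $e^{\pi^2}|f(x)|$ and is integrable. A standard convexity result for $L^1$ norms on lines (the three-lines/Hardy-type estimate for functions of bounded type in a strip) gives $\int|f(x+iy)|dx\le e^{\pi^2}\|f\|_1$ for $0<y<\pi$. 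To justify it, I would apply subharmonicity of $|f|$: take the Poisson integral $P$ of $|f|$ on the boundary of $S$, note that $|f|\le P$ by Phragmén–Lindelöf (since $f$ is bounded in $S$), and integrate $P$ in $x$.

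\emph{Step 3.} Step 1 with the localisation gives $|c_n|\le C\int_{|x-n|\le 1}\int_0^\pi|f|$. Summing over $n$ and using Step 2 yields $\sum|c_n|\le C\|f\|_1<\infty$, so $f\in V^1(G)$.

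The main obstacle is making the localisation in Step 1 rigorous, i.e. obtaining a formula for $c_n$ as an integral of $f$ over the strip with a weight summable in $n$. If that fails, I would instead bound $|c_n|e^{-n^2}e^{2nx}\le \frac1\pi\int_0^\pi|\varphi(x+iy)|dy$, then average over $x\in[n-\tfrac12,n+\tfrac12]$. This gives $|c_n|\le C\int_{n-1/2}^{n+1/2}\int_0^\pi e^{-(x-n)^2}\cdot e^{y^2}|f(x+iy)|\,dy\,dx$, because $e^{x^2-2nx+n^2}=e^{(x-n)^2}$ is bounded there. That already suffices together with Step 2.
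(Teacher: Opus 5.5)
Your proof is correct, and it takes a genuinely different route from the paper's. You extract $c_n$ as a Fourier coefficient of the $\pi$-periodic function $y\mapsto\varphi(x+iy)$, which gives $|c_n|\le\frac1\pi\int_0^\pi e^{(x-n)^2-y^2}|f(x+iy)|\,dy$ for every real $x$. Averaging over $|x-n|\le\frac12$ and summing over $n$ reduces everything to bounding $\int_0^\pi\int_\R|f(x+iy)|\,dx\,dy$. Three facts then give $\int_\R|f(x+iy)|\,dx\le e^{\pi^2}\|f\|_1$: the quasi-periodicity $|f(x+i\pi)|=e^{\pi^2}|f(x)|$, the bound $|f|\le Ce^{\pi^2}$ on the strip from \eqref{order_2}, and the Poisson majorant of $|f|$, whose boundary harmonic measures have masses $1-y/\pi$ and $y/\pi$.

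Your fallback display has the weight inverted: it should be $e^{(x-n)^2-y^2}$, not $e^{-(x-n)^2+y^2}$. Both are bounded on $[n-\frac12,n+\frac12]\times[0,\pi]$, so nothing changes. The fallback is already a rigorous localization, so the ``main obstacle'' you flag is not really one.

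The paper argues on the real line by duality instead. It regularizes $f$ to $f_\varepsilon(x)=f(x/(1+\varepsilon))e^{-\varepsilon x^2/(1+\varepsilon)^2}\in V^1(G_{1/(1+\varepsilon)})$, so that term-by-term integration is legitimate. It then builds Schwartz functions $\psi_\alpha$ that are biorthogonal to the Gaussian translates and have uniformly bounded periodizations. Finally it tests $f_\varepsilon$ against $\sum_k\overline{c_k}|c_k|^{-1}\psi_\alpha(x-k)$ and lets $\varepsilon\to0$.

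Your route needs no regularization: for merely bounded $c_n$, the series in $y$ converges absolutely thanks to the factor $e^{-n^2+2nx}$. It also needs no dual generator. It yields an explicit estimate, e.g.\ $\sum_n|c_n|\le e^{\pi^2+1/4}\|f\|_1$ for $a=1$, so it reproves the lower inequality in \eqref{eq:norm_equiv} for $p=1$. It also uses the same periodicity mechanism that drives the rest of the paper. The paper's biorthogonal-system argument stays within the standard real-variable toolkit for shift-invariant spaces and never continues $f$ off the real line, although its dilation step is just as specific to the Gaussian as your use of quasi-periodicity.
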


\begin{proof} We set $a=1.$

  Assume $f\in V^\infty(G)\cap L^1(\R)$. Fix any positive $\varepsilon<1$ and consider the function 
 $$f_\epsilon(x):=f\left(\frac{x}{1+\varepsilon}\right)e^{-\varepsilon x^2/(1+\varepsilon)^2}=\sum_{n\in\Z}c_n(f)e^{-(x-n)^2/(1+\varepsilon)-\varepsilon n^2/(1+\varepsilon)}.$$Clearly, $f_\epsilon\in V^1(G_{1/(1+\varepsilon)})$ and $\|f_\varepsilon\|_1\leq (1+\varepsilon)\|f\|_1\leq 2\|f\|_1.$ Note also that the Fourier transform of $f_\varepsilon$ is given by
 $$\hat f_\varepsilon(t)=\int_\R e^{-2\pi i tx}f_\varepsilon(x)\,dx=\sqrt{\pi(1+\varepsilon)}\left(\sum_{n\in\Z}c_n(f)e^{-\varepsilon n^2/(1+\varepsilon)}e^{-2\pi i n t}\right) e^{-\pi^2(1+\varepsilon)t^2}.$$

    Since the trigonometric system $\{e^{2\pi int}\}_{n\in\Z}$ is not complete in $L^2(I),$ for every interval $I$ of length $>1,$ there is a function $\Psi\in C^\infty(\R)$ which vanishes for $|t|\geq 2$ and satisfies 
    $$\int_\R e^{2\pi int}\Psi(t)\,dt=
    \begin{cases}
 \  1,& n=0;\\
 \ 0,& \ n\in\Z\setminus\{0\}.
\end{cases}
   $$

For every $\alpha>0$ we set $$\Psi_\alpha(t):=\sqrt{\frac{\alpha}{\pi}}e^{\pi^2 t^2/\alpha}\Psi(t),$$and denote by $\psi_\alpha$ its inverse Fourier transform. Clearly, $\Psi_\alpha\in C^\infty(\R)$, and   it vanishes for $|t|\geq 2 $, so that $\psi_\alpha$ is a Schwartz function for every $\alpha>0.$ From the Plancherel's theorem, we get
$$\int_\R e^{-\alpha(x-n)^2}\psi_\alpha(x-m)\,dx=\int_\R e^{-\alpha(x-n+m)^2}\psi_\alpha(x)\,dx=\int_\R e^{2\pi i(n-m)t}\Psi(t)\,dt= \begin{cases}
 \  1,& n=m;\\
 \ 0,&  n \neq m.
\end{cases}$$

We now assume that $\alpha=1/(1+\varepsilon)\in[1/2,1]$. We have
\begin{equation}\label{eq:ortck}
    \int_\R f_\varepsilon(x) \psi_{1/(1+\varepsilon)}(x-k) \, dx=c_k(f)e^{-\varepsilon n^2/(1+\varepsilon)}.
\end{equation}

Clearly, 
$$\|\psi_\alpha\|_{L^\infty(\R)}+ \| x^2\psi_\alpha(x)\|_{L^\infty(\R)}\leq \|\Psi_\alpha\|_{L^1(\R)}+ \|(\Psi_\alpha)''\|_{L^1(\R)}<C,$$where $C$ does not depend on $\varepsilon.$
Therefore, 
 \begin{equation}\label{epsi}\sup_{x\in\R}\sum_{n\in\Z}|\psi_{\alpha}(x-n)| <C,\quad \text{for every } \alpha=1/(1+\varepsilon)\in [1/2,1].\end{equation}

Consider the integral $$I_{\varepsilon}:=\int_\R f_\varepsilon(x)\sum_{k\in\Z}\frac{\overline{c_k(f)}}{|c_k(f)|}\psi_{1/(1+\varepsilon)}(x-k)\,dx.$$By \eqref{epsi}, 
\begin{equation}\label{e0e}|I_\varepsilon|<C\|f_\varepsilon\|_1\leq 2C\|f\|_1,\quad 0<\varepsilon\leq 1.\end{equation} On the other hand, since  the sequence $\{c_n(f)\exp(-\varepsilon n^2/(1+\varepsilon))\}\in l^1(\Z)$, by the right-hand-side inequality in \eqref{eq:norm_equiv}, the function  $$\sum_{n\in\Z}|c_n(f)|e^{-\varepsilon n^2/(1+\varepsilon)}e^{-(x-n)^2/(1+\varepsilon)}$$belongs to $L^1(\R).$ By the Dominated convergence theorem, integrating term for term  and using \eqref{eq:ortck}, we obtain $$I_\varepsilon =\sum_{n\in\Z}|c_n(f)|e^{-\varepsilon n^2/(1+\varepsilon)}.$$ Using \eqref{e0e} and  letting $\varepsilon \to 0$, we get $\{c_n(f)\}\in l^1(\Z),$ so that $f\in V^1(G).$
\end{proof}

\begin{lemma}\label{lcl}Let $f\in V^1(G_a)$. The following conditions are equivalent:

{\rm(i)} $e^{2ax}f(x)\in L^1(\R);$

{\rm(ii)} $\{e^{2an}c_n(f)\}_{n\in\Z}\in l^1(\Z);$ 

{\rm(iii)} $e^{2ax}f(x)\in V^1(G_a)$.

 A similar result holds if we set "$-$" in every exponent in the three conditions above.
   
\end{lemma}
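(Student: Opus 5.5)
Set $a=1$ and write $c_n=c_n(f)$. The plan is to prove (ii)$\Rightarrow$(iii)$\Rightarrow$(i)$\Rightarrow$(ii). The first two implications come from a direct computation; the real content is (i)$\Rightarrow$(ii), which we obtain from Lemma~\ref{lgf}.

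\emph{The key identity.} For every $n$,
$$e^{2x}e^{-(x-n)^2}=e^{2n+1}e^{-(x-n-1)^2}.$$
Hence, formally,
$$e^{2x}f(x)=\sum_{n\in\Z} e^{2n+1}c_n\,e^{-(x-(n+1))^2}=\sum_{m\in\Z} e^{2m-1}c_{m-1}\,e^{-(x-m)^2}.$$

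\emph{(ii)$\Rightarrow$(iii)$\Rightarrow$(i).} If $\{e^{2n}c_n\}\in l^1(\Z)$, the coefficient sequence $\{e^{2m-1}c_{m-1}\}$ lies in $l^1(\Z)$. The series then converges absolutely, so $e^{2x}f\in V^1(G)$, which is (iii). Since $V^1(G)\subset L^1(\R)$ by \eqref{eq:norm_equiv}, (iii) gives (i).

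\emph{(i)$\Rightarrow$(ii).} Assume $F(x):=e^{2x}f(x)\in L^1(\R)$. Extend $F$ to the entire function $F(z)=e^{2z}f(z)$. By Lemma~\ref{gauss_char}, $f(z)=e^{-z^2}\varphi(z)$ with $\varphi$ entire, $\pi i$-periodic and $|\varphi(z)|\le Ce^{x^2}$. Then $F(z)=e^{-z^2}\tilde\varphi(z)$, where $\tilde\varphi(z)=e^{2z}\varphi(z)$ is still $\pi i$-periodic, because $e^{2z}$ has period $\pi i$.

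The main obstacle is the growth condition: we only have $|\tilde\varphi(z)|\le Ce^{x^2+2x}$, which is not of the form required in \eqref{ff}. To get around this, use $L^1$-integrability to pass to a bound of the right shape. First, $F\in L^1(\R)$ together with the rapid decay of $f$ gives $F\in L^1(\R)$ along horizontal lines $\R+iy$, uniformly for $y$ in a period strip. Indeed, each $e^{-(x+iy-n)^2}$ differs from $e^{-(x-n)^2}$ only by bounded factors, so convolution-type estimates apply; alternatively one can apply a Phragmén--Lindelöf argument in the strip $0\le y\le\pi$ to $F$, which has order~2 growth there. Combined with the mean value property on discs, this gives $F$ bounded on the strip.

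Then $|\tilde\varphi(z)|=|e^{z^2}F(z)|\le Ce^{x^2}$ on the strip, and by periodicity on all of $\Cc$. Lemma~\ref{gauss_char} now yields $F\in V^\infty(G)$. Since also $F\in L^1(\R)$, Lemma~\ref{lgf} gives $F\in V^1(G)$.

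\emph{Reading off the coefficients.} The coefficients of $F$ are determined by the Laurent coefficients of $\tilde\varphi(\log w/2)=w\,\psi(w)$, which are simply the shifted coefficients of $\psi$. Uniqueness of Laurent expansions therefore forces
$$c_m(F)=e^{2m-1}c_{m-1}.$$
Since $F\in V^1(G)$, we have $\{c_m(F)\}\in l^1(\Z)$, which is exactly $\{e^{2n}c_n\}\in l^1(\Z)$, i.e.\ (ii).

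The boundedness of $F$ on the strip is where I expect difficulty. As a fallback, I would avoid it entirely by using the dual functions from the proof of Lemma~\ref{lgf}. Pair $F_\varepsilon$ (the dilation of $F$) against $\sum_k \overline{\sigma_k}\psi_\alpha(x-k)$ with unimodular $\sigma_k$, exactly as in that proof, to obtain a uniform bound on the partial $l^1$ sums of $e^{2n}c_n$ directly. Finally, the ``$-$'' version follows by applying the result to $f(-x)$.
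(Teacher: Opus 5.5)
Your implications (ii)$\Rightarrow$(iii)$\Rightarrow$(i) and your reduction of the ``$-$'' case to $f(-x)$ are fine. The step you flagged in your main route for (i)$\Rightarrow$(ii), however, is not justified as written. The termwise estimate is circular. Bounding $|e^{-(x+iy-n)^2}|=e^{y^2}e^{-(x-n)^2}$ only gives $\int_\R e^{2x}|f(x+iy)|\,dx\le e^{y^2}\sqrt{\pi}\sum_n|c_n|e^{2n+1}$, which is finite exactly when (ii) holds. Since the phases $e^{-2iy(x-n)}$ depend on $n$, integrability of $F$ on $\R$ says nothing, term by term, about other lines. Also, $f$ has no rapid decay in general (take $c_n=(1+n^2)^{-1}$). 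Nor can Phragm\'en--Lindel\"of be applied to $F$ itself, because $F$ is only known to be integrable, not bounded, on $\R$.

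The step can be repaired. From $f(z+\pi i)=e^{\pi^2-2\pi iz}f(z)$ you get $|F(x+\pi i)|=e^{\pi^2}|F(x)|$, so $F$ is integrable on both edges of the strip $0\le {\rm Im\,}z\le\pi$, where $|F(z)|\le Ce^{2|x|}$. Apply Phragm\'en--Lindel\"of to $\Phi_\phi(z)=\int_\R F(z+t)\phi(t)\,dt$ for $\phi\in C_c(\R)$ with $\|\phi\|_\infty\le 1$, and take the supremum over $\phi$. This gives uniform $L^1$ bounds on horizontal lines, after which your mean-value argument, Lemma~\ref{gauss_char} and Lemma~\ref{lgf} go through.

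Your fallback, by contrast, is already a complete proof, and it differs from the paper's. The paper first shows $F\in V^\infty(G)$. From $e^{bx}f\in L^1(\R)$ for $0\le b\le 2$, it continues $\hat f$ analytically into $0<{\rm Im\,}z<1/\pi$, so $\sum_{n\ge0}c_nw^n$ extends boundedly to $|w|\le e^2$, and Cauchy's estimate gives $|c_n|\le Ce^{-2n}$. Lemma~\ref{lgf} is then used as a black box.

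You instead note that the termwise identity gives $F(x)=\sum_m d_me^{-(x-m)^2}$ with $d_m=e^{2m-1}c_{m-1}$, absolutely convergent pointwise. The proof of Lemma~\ref{lgf} uses the coefficients only through $\{d_me^{-\varepsilon m^2/(1+\varepsilon)}\}\in l^1(\Z)$, which is needed for $F_\varepsilon\in V^1(G_{1/(1+\varepsilon)})$ and for term-by-term integration. This holds here because $c\in l^1(\Z)$. The pairing then gives $\sum_m|d_m|e^{-\varepsilon m^2/(1+\varepsilon)}\le 2C\|F\|_1$ uniformly in $\varepsilon$, and monotone convergence yields (ii).

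This skips both the $V^\infty$ step and the Fourier continuation. In effect it proves a stronger Lemma~\ref{lgf}, in which coefficients of sub-Gaussian growth suffice. The paper's route has the advantage of using Lemma~\ref{lgf} exactly as stated rather than reopening its proof.
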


\begin{proof}

For brevity, set $a=1$.

(i)$\Rightarrow$(iii) and  (ii).
Assume (i) holds true. Then $e^{bx}f(x)\in L^1(\R),$ for every $0\leq b\leq 2,$ and so the Fourier transform
$$\hat f(z)=\int_\R e^{-2\pi izx}f(x)\,dx=\frac{1}{\sqrt\pi}\left(\sum_{n\in\Z}c_n(f)e^{-2\pi izn}\right)e^{-\pi^2 z^2}
$$admits an analytic continuation into the strip $0< {\rm Im\,}\,z<\pi^{-1}$, and is continuous up to its boundary. This means that the $1$-periodic series
$$\sum_{n\in\Z}c_n(f)e^{-2\pi izn}$$ also admits an analytic continuation into the strip $0< {\rm Im\,}\,z<\pi^{-1}$, and is continuous up to its boundary. 
This property is obviously true for the series $$\sum_{n\geq0}c_n(f)e^{-2\pi izn}.$$This is equivalent to the condition that the power series
$\sum_{n\geq0}c_nw^n$ admits an analytic continuation to the disk $|w|<e^2,$ and is bounded above by some constant $C$ in its closure. By  Cauchy's inequality,
$$|c_n|\leq \frac{C}{e^{2n}},\quad n\geq0,$$which proves that $\{c_n e^{2n}\}_{n\geq0}\in l^\infty(\mathbb N).$ 

Observe that 
$$e^{2x}f(x)=e^{2x}\sum_{n\in\Z}c_n(f)e^{-(x-n)^2}=\sum_{n\in\Z}c_n(f)e^{2n+1}e^{-(x-n-1)^2}=:g(x),$$and set $c_n(g):=c_n(f)e^{2n+1}.$
Then $g\in V^\infty(G).$ 
By (i) and Lemma \ref{lgf}, $g\in V^1(G)$, which proves (iii) and (ii).

(ii)$\Rightarrow$(i). Let $g$ be the function above.
By (ii), $\{c_n(g)\}\in l^1(\Z)$ and since the Gaussian generator satisfies condition \eqref{eq:norm_equiv},  condition (i) holds true.

\end{proof}

\subsection{Proof of Theorem~\ref{th:gauss_ext} {\rm (i)}}\label{sec1}

Again, for simplicity, we set $a=1$. 

{\bf Necessity.}  
Assume a  function $f\in V^1(G)$  vanishes at $\lambda$ and $\bar\lambda$, where Im$\,\lambda\in(0,\pi)$.

 Assume $\, {\rm Im\,}\lambda\ne\pi/2.$ 
Set
$$
\tau(z) := \frac{1}{(e^{2z} - e^{2\lambda})(e^{2z} - e^{2 \bar\lambda})}, \quad z \in \Cc.
$$
Note that $\tau(x)$ is a real non-constant function bounded on $\R$.
Since the function $e^{z^2}f(z)$ is $\pi i$-periodic, we see that  $f$ vanishes on the set $\{\lambda,\bar\lambda\}+\pi i \Z.$ Therefore, $\tau f$ is an entire function.

By Lemma~\ref{h_lemma}, to prove that $f$ is not an extreme point, it suffices to show that $q:=\tau f$ is an element of $V^1(G).$
 Indeed,  since $f$ satisfies \eqref{order_2} (with $a=1$), then so does $q$. It is clear that the function $\varphi(z):=e^{z^2}q(z)$ is $\pi i$-periodic, 
satisfies \eqref{ff} and $q$ belongs to $L^1(\R)$. By  Lemmas \ref{gauss_char} and \ref{lgf}, we conclude that  $q\in V^1(G)$. 
Therefore, $f$ is not extreme.

 Assume ${\rm Im \,}\lambda=\pi/2$. Then the function $$\tau(z):=\frac{1}{e^{2z}-e^{2\lambda}}$$ is real and bounded on the real line. The  argument  in part (i) leads to the conclusion that $f$ is not extreme. 

{\bf Sufficiency.}
Assume that there is no $\lambda \in \Cc \setminus\{\R + \pi i k\}_{k \in \Z}$ such that $f(\lambda) = f(\bar \lambda) = 0.$ By Lemma~\ref{h_lemma}, to show that $f$ is extreme, it suffices to prove that every function $$\tau(x) := \frac{q(x)}{f(x)}, \quad q \in V^1(G),$$ which is real and bounded on $\R$ is constant.

Since $$\tau(x)=\frac{e^{x^2} q(x)}{e^{x^2}f(x)}:=\frac{\varphi_1(x)}{\varphi_2(x)},$$where $\varphi_j$  are $\pi i$-periodic entire functions, then $\tau$ is also $\pi i$-periodic meromorphic function.

 Assume  $\tau$ has a pole at some point $\lambda\in\mathbb C$. Since $\tau$ is real, it also has a pole at $\bar\lambda$. Since it is bounded on $\R$ and $\pi i$-periodic, we may assume that  $0<\,$Im$\,\lambda<\pi.$ 
However, this implies 
 that $f$ vanishes at both $\lambda$ and $\bar\lambda$, which is not the case. We conclude that $\tau$ is an entire function. By Lemma \ref{c24}, $\tau$ satisfies 
\eqref{eest}. 
Recall that $\tau$ is  bounded on $\R$, and so also on the horizontal lines ${\rm Im\,} z=\pi k, k\in\Z$. By the Phragmen--Lindel\"{o}f principle, it is a constant function.
This finishes the proof of the theorem on extreme points for the Gaussian kernel.

\subsection{Proof of Theorem~\ref{th:gauss_ext}  {\rm (ii)}}\label{34}

We set $a=1.$

{\bf Necessity.}
    We start with proving that condition \eqref{eq:exposed_cond} is necessary. 
    Assume that it  is not satisfied: there is a point $\lambda \in \R$ such that  $f(\lambda)=f'(\lambda)=0$.

    Set 
    $$
    h(z):=\frac{1}{(e^{2z}- e^{2\lambda})^2}.
    $$
    Since the function $e^{z^2}f(z)$ is $\pi i$-periodic, both $f$ and $f'$ vanish on the set $\lambda+\pi i\Z$, and so   $fh$ is an entire function. Similarly  to the proof in Section \ref{sec1}, one can check $fh\in V^1(G)$.
    Since $h$ is non-negative on $\R$, by Lemma~\ref{exp_lemma}, we conclude that $f$ is not exposed. 

It is an immediate consequence of 
Lemma \ref{lcl} that condition \eqref{ex} is necessary.

{\bf Sufficiency.}
    Assume that $f$ satisfies conditions~\eqref{eq:exposed_cond} and \eqref{ex}. To prove that $f\in \expo(V^1(G))$, by Lemma~\ref{exp_lemma}, it suffices to show that every non-negative function $\tau$ that can be represented as $\tau = q/f$ for some $q \in V^1(G)$ is constant.
    
  Let us show that $\tau$ is an entire function. Similarly to the proof in Section \ref{sec1}, one may show that $\tau$ does not have poles in the strip $0<\,$Im$\,z<\pi$.
  Therefore, due to  the $\pi i$-periodicity of $\tau$, if it has a pole, then it has a pole at some $\lambda\in\R$. By \eqref{eq:exposed_cond}, every such pole must have order one.
Then $$\tau(x-\lambda)= \frac{q(x-\lambda)}{f'(\lambda)(x-\lambda)}+O(1), \text{ as } x\to \lambda,$$which clearly implies that $\tau$ cannot be non-negative on $\R$. Thus, $\tau$ is an entire function.
 By Lemma  \ref{c24}, $\tau$ satisfies \eqref{eest}.

Write
$$\tau(z)=\sum_{n\in\Z}a_ne^{2nz}=a_0+\sum_{n>0}a_ne^{2nz}+\sum_{n<0}a_ne^{2nz}:=a_0+\tau^+(z)+\tau^-(z).$$We have to check that $\tau^-=\tau^+=0.$ 

Assume  $\tau^+$ contains a finite number of non-zero coefficients, and set $n_0:=\max_{n>0}\{a_n\ne0\}$. Then condition $\tau f\in V^1(G)$ implies $e^{2n_0 x}f(x)\in L^1(\R),$ and so $e^{2x}f(x)\in L^1(\R)$ which contradicts to condition~\eqref{ex}. 

Assume  $\tau^+$ contains an infinite number of non-zero coefficients. Then the function $$\psi^+(w):=\tau^+(\log w/2)=\sum_{n>0}a_nw^n$$ is an entire transcendental function. A direct consequence of the Cauchy estimate
$$|a_n|\leq\frac{M(R,\psi^+)}{R^n},\quad \text{for every } R>0,$$ is that $$M(R,\psi^+)R^{-n}\to\infty,\ \ R\to\infty,\quad \text{for every }n>0.$$
In particular, $M(r,\psi^+)\geq Cr^8, r\geq 1$. We conclude that $$\sup_{0\leq x\leq r}|\tau^+(x+iy)|\geq Ce^{8x},\quad x\geq 0.$$

Recall that $\tau^+$ is an entire $\pi i$-periodic function satisfying \eqref{eest}.
Similarly to the proof  of Lemma \ref{c24}, by applying Lemma~\ref{hayman} to the entire function of slow growth $\psi^+$ defined above, one may show that there is a sequence $r_k\to\infty$ such that 
\begin{equation}\label{eq:tau_plus_global}
  \sup_{0\leq x\leq r,y\in\R}|\tau^{+}(x+iy)| > Ce^{4r}, \quad r=r_k,  
\end{equation}
and that there exist $y_1 > 0, y_2 < 0 $, and $R > 0$ such that 
\begin{equation}\label{eq:tau_rays}
  \sup_{0\leq x\leq r}|\tau^{+}(x+iy_j)| > Ce^{4r}, \quad r>R, \,\, j=1,2.  
\end{equation}Therefore, $|\tau^+(x+iy)|\geq Ce^{4x}$ on the boundary of each rectangle $$P_k:=\{z=x+iy: 0\leq x\leq r_k, y_2\leq y\leq y_1\}, \,\, k\in\N.$$

Recall that $\tau f=g\in V^1(G)$. Hence, $\tau^+ f=g-\tau^-f$. By \eqref{order_2}, $f$ and $g$  are uniformly bounded above on $P_k, k\in\N.$ Clearly, the same is true for $\tau^-$. We conclude that $|\tau^+(z)f(z)|<C, z\in P_k,$ where $C$ does not depend on $k$.
By \eqref{eq:tau_plus_global} and \eqref{eq:tau_rays}, the function $e^{4z}f(z)$ is bounded above on the boundary of $P_k$. From \eqref{order_2} we see that this function has order two (or less), so the  Phragmen-Lindel\"of principle proves that it is bounded in $P_k$. In particular, we have $|e^{4x}f(x)|\le C, x \ge 0$, which contradicts \eqref{ex}. We conclude that the series for $\psi$ cannot contain an infinite number of terms. This and the previous argument prove that $\tau^+=0.$ Similarly, one may check that $\tau^-=0$, so that $\tau=const.$

\section{Hyperbolic secant: Proof of Theorem~\ref{th:hyp_ext}}

\subsection{Auxiliary results}

Recall that every $f\in V^1_\Gamma(H_a)$ admits a representation 
\begin{equation}\label{eq:f_sec}
  f(x)=\sum_{\gamma\in\Gamma}\frac{c_\gamma}{e^{a(x-\gamma)}+e^{a(\gamma-x)}},\quad c_\gamma\in l^1(\Gamma),  
\end{equation}
where $a>0$ and $\Gamma \subset \R$ is a separated set. One may check that $f$ admits an extension to a meromorphic function, which may only have simple poles on the set 
 \begin{equation}\label{pa}  \mathcal{P}_a:= \Gamma+\frac{i\pi}{a}\left(\Z+\frac{1}{2}\right) \end{equation} and   
 \begin{equation}\label{res}c_\gamma(f)=2ai(-1)^k\res\left(f,\gamma+ \frac{\pi i k}{a}+\frac{\pi i}{2a}\right),\quad  k\in\Z,\, \gamma \in \Gamma. \end{equation}

Clearly, every $f\in V^1_{\Gamma}(H_a)$ satisfies the following four conditions: 
\begin{equation}\label{eq:pi_odd}
    f(z+\pi i/a) = -f(z), \quad \text{ for every \,} z \in \Cc,
\end{equation}
 \begin{equation}\label{1} f \text{ is a meromorphic function, whose every pole is simple and lies in } \mathcal{P}_a,  \end{equation} 
 \begin{equation}\label{2}
 f\text{ is bounded  above on every set }
    \mathbb C\setminus \left(\mathcal{P}_a+\{z:|z|<\delta\}\right) ,\quad \forall\delta>0,
 \end{equation} 
\begin{equation}\label{3}
\{\res(f,\gamma+ \pi i/(2a)\}_{\gamma \in \Gamma}\in l^1(\Gamma).
 \end{equation}

 The last condition follows easily from \eqref{res}.
 
\begin{lemma}\label{lem:secant_represent}
Let $a>0$. The following statements are equivalent:

{\rm(i)}  $f\in V^1_\Gamma(H_a)${\rm ;} 

{\rm(ii)} $f\in L^1(\R) $ and is a meromorphic function satisfying  \eqref{eq:pi_odd}--\eqref{3}.
\end{lemma}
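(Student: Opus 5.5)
The direction (i)$\Rightarrow$(ii) is essentially recorded just before the statement. The substance is (ii)$\Rightarrow$(i). We take $a=1$, so the poles lie in $\mathcal P_1=\Gamma+i\pi(\Z+\tfrac12)$.

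Given $f$ as in (ii), we subtract off the principal parts. Define
\[
c_\gamma:=2i\,\res\left(f,\gamma+\tfrac{\pi i}{2}\right),\qquad \gamma\in\Gamma .
\]
By \eqref{3}, $\{c_\gamma\}\in l^1(\Gamma)$, so
\[
g(z):=\sum_{\gamma\in\Gamma}\frac{c_\gamma}{e^{z-\gamma}+e^{\gamma-z}}
\]
belongs to $V^1_\Gamma(H)$. Since $g$ satisfies \eqref{eq:pi_odd}, its residues at $\gamma+\pi i/2+\pi i k$ equal $(-1)^k$ times its residue at $\gamma+\pi i/2$, and this is given by \eqref{res}. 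The same anti-periodicity \eqref{eq:pi_odd} holds for $f$, so $f$ has the same relation between its residues. By the choice of $c_\gamma$, the residues of $f$ and $g$ then agree at every point of $\mathcal P_1$. All poles are simple by \eqref{1}, so $u:=f-g$ is entire.

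Next we show that $u$ is bounded. By \eqref{eq:pi_odd}, $u$ is $2\pi i$-periodic, so it suffices to bound $u$ on the strip $0\le \operatorname{Im} z\le 2\pi$. By \eqref{2}, applied to both $f$ and $g$, $u$ is bounded outside the $\delta$-neighbourhoods of the poles. Since $\Gamma$ is separated, for small $\delta$ these neighbourhoods are disjoint discs. By the maximum principle on each disc, $u$ is bounded there by its maximum on the boundary circle, which lies in the region where the bound already holds. Hence $u$ is a bounded entire function, and by Liouville $u\equiv C$.

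Finally we use integrability to kill the constant. Both $f$ and $g$ lie in $L^1(\R)$, so the constant $C=f-g$ lies in $L^1(\R)$, which forces $C=0$. Therefore $f=g\in V^1_\Gamma(H)$.

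The only delicate point is the uniformity in \eqref{2}: the bound must hold on the whole complement of the neighbourhoods, including near the pole discs. This is exactly what the hypothesis provides, so no growth argument of Hayman type is needed here. The remaining care is in checking that the residue bookkeeping matches \eqref{res}. For that we use the identity $\res\bigl(H(\cdot-\gamma),\gamma+\pi i/2\bigr)=1/(2i)$.
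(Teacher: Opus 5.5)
Your proof is correct and follows the paper's own argument: you subtract the series $g$ built from the residues via \eqref{res}, use \eqref{eq:pi_odd} and \eqref{1} to see that $f-g$ is entire, use \eqref{2} and Liouville to see that it is constant, and use integrability on $\R$ to conclude that the constant is zero. Your additions are details the paper leaves implicit, namely the maximum-principle step on the small disjoint discs and the computation $\res\bigl(H(\cdot-\gamma),\gamma+\pi i/2\bigr)=1/(2i)$.
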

\begin{proof}
As usual, set $a=1.$
The implication (i) $\Rightarrow$ (ii) is already explained.

   (ii) $\Rightarrow$ (i). Set
   $$g(z):=\sum_{\gamma\in\Gamma}\frac{c_\gamma}{e^{(z-\gamma)}+e^{(\gamma-z)}},$$where the coefficients $c_\gamma$ are defined in \eqref{res}. By  \eqref{3}, $g\in V^1_{\Gamma}(H)$. 
   
   Consider the difference $q:=f-g$. It is clear that $q$ satisfies \eqref{2}.  By \eqref{eq:pi_odd} and \eqref{1}, one may easily check that it has a removable singularity at every point $z\in \mathcal{P}_1$.  Hence, $q$ is a bounded entire function, and so $q\equiv const$. Since both $f$ and $g$ are elements of  $L^1(\R)$, we conclude that $q\equiv0.$
\end{proof}

\subsection{Proof of Theorem~\ref{th:hyp_ext} (i)}\label{s42}
We set $a=1$.

{\bf Necessity.}
Suppose a function $f\in V^1_{\Gamma}(H)$ vanishes at the points
$\lambda, \bar \lambda,$ where $0<{\rm Im}\,\lambda <\pi$. We need to show that $f \notin \ext(V^1_{\Gamma}(H)).$ We will distinguish between two cases.

{\bf Case} $(i).$ Assume that ${\rm Im\,}\lambda\ne\pi/2$.
By Lemma~\ref{h_lemma}, we are done if we verify that the function 
\begin{equation}\label{q}q(x):=\frac{f(x)}{(e^{2x}-e^{2\lambda})(e^{2x}-e^{2\bar \lambda}) }\end{equation}
belongs to $V^1_{\Gamma}(H)$.
It suffices to check that $q$ satisfies condition (ii) of Lemma~\ref{lem:secant_represent}.

It is straightforward to check that $q$ belongs to $ L^1(\R)$ and satisfies \eqref{eq:pi_odd} and \eqref{3}. 

Denote by $\Lambda\subset\mathbb C$ the zero set of the function $h(z):= (e^{2z}-e^{2\lambda})(e^{2z}-e^{2\bar\lambda})$.
The zeros of $f$ are $\pi i$-periodic, so that $f$ vanishes on $\Lambda$. Hence, $q$ satisfies \eqref{1}.

It is clear that $h$ is bounded below outside of any set $S_\delta:=\Lambda+\{|z|\leq\delta\},\delta>0$.
We assume that $\delta$ is so small that $S_\delta$ does not intersect the set of poles of $f$. Since $f$ satisfies \eqref{2}, it follows that $|q(z)|$  is uniformly bounded above on the set of points $z$ that lie at the distance $\geq\delta$ to both $\Lambda$ and the set of poles of $f$. However, since $q$ is analytic on  $S_\delta$,  by the maximum modulus principle,  it is also bounded above on $S_\delta$. Hence $q$ satisfies \eqref{2}, and so $q\in V^1_{\Gamma}(H)$.

{\bf Case }$(ii).$ Assume ${\rm Im \,}\lambda=\pi/2$. Then the function $\left(e^{2z}-e^{2\lambda}\right)^{-1}$ is real and bounded on the real line. The argument in part (i) leads to the conclusion that $f$ is not extreme. 

To finish the proof of the necessity part, it suffices to show that if $c_{\sigma} = 0$ for some $\sigma \in \Gamma$ then $f$ is not extreme. Take $\sigma' \in \Gamma$ such that $c_{\sigma'} \neq 0$ and set
$$
\tau(x) = \frac{e^{(x - \sigma')} + e^{(\sigma' - x)}}{ e^{(x-\sigma)} + e^{(\sigma -x)}}.
$$
It is clear that $\tau$ is real and bounded on $\R$. 
Using the same approach, one can verify that $g:=\tau f\in L^1(\R)$ and  satisfies \eqref{eq:pi_odd}--\eqref{3}.
Thus, Lemma~\ref{lem:secant_represent} asserts that $g \in V^1_{\Gamma}(H).$

{\bf Sufficiency.} Assume that $f$ satisfies conditions~\eqref{s_zeros} and~\eqref{eq:c_non_zero}.
By Lemma~\ref{h_lemma}, it suffices to show that every real-valued function $\tau$  on $\R$  that is bounded and satisfies $\tau f=:q\in V^1_{\Gamma}(H)$ must be constant.

\begin{claim}
    $\tau$ is an entire function.
\end{claim}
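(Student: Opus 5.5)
We need to show that $\tau=q/f$, which is a priori only a meromorphic function, has no poles at all. Both $f$ and $q$ lie in $V^1_\Gamma(H)$, so each satisfies \eqref{eq:pi_odd}. The sign flips cancel in the quotient, so $\tau$ extends from $\R$ to a $\pi i$-periodic meromorphic function on $\Cc$. Since $\tau$ is real on $\R$, the reflection principle gives $\tau(\bar z)=\overline{\tau(z)}$. By periodicity it therefore suffices to exclude poles in the closed strip $0\le {\rm Im}\, z\le \pi$. We split this strip into three pieces: the real line, the points of $\mathcal P_1$ (which sit on ${\rm Im}\,z=\pi/2$), and the remaining open-strip points.

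\textbf{Step 1: poles off the real line and off $\mathcal P_1$.} Suppose $\lambda$ is a pole of $\tau$ with $0<{\rm Im}\,\lambda<\pi$ and $\lambda\notin\mathcal P_1$. The function $q$ is holomorphic at $\lambda$, so $f(\lambda)=0$. By the reflection symmetry $\bar\lambda$ is also a pole. The point $\bar\lambda$ is not in $\mathcal P_1$, so $q$ is holomorphic there too, and hence $f(\bar\lambda)=0$. This contradicts \eqref{s_zeros}.

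\textbf{Step 2: poles on the real line.} A pole at a real point is impossible, because $\tau$ is bounded on $\R$.

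\textbf{Step 3: the points $\gamma+\pi i/2\in\mathcal P_1$.} This is the step where hypothesis \eqref{eq:c_non_zero} enters, and I expect it to be the main point. By \eqref{1}, $f$ has at worst a simple pole at $\gamma+\pi i/2$. By \eqref{res} its residue there equals $c_\gamma(f)/(2i)$ up to sign, and this is nonzero by \eqref{eq:c_non_zero}. So $f$ has a genuine simple pole at this point, and $1/f$ has a simple zero there. The function $q$ has at most a simple pole at the same point. Hence $\tau=q\cdot(1/f)$ is holomorphic there; indeed $\tau(\gamma+\pi i/2)=c_\gamma(q)/c_\gamma(f)$.

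One detail needs care: a pole at one of these points might not be caught by Step 1, since $\gamma+\pi i/2$ need not be a zero of $f$. This is exactly why \eqref{eq:c_non_zero} is needed; without it $f$ could be regular there while $q$ has a pole. Conversely, points $\lambda$ in the strip with $\lambda\in\mathcal P_1$ are handled entirely by Step 3. One must also check that the conjugate point $\bar\lambda$ used in Step 1 never lands in $\mathcal P_1$ when $\lambda$ does not; this holds because $\mathcal P_1$ is invariant under conjugation modulo $\pi i$.

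Combining Steps 1–3 with the $\pi i$-periodicity, $\tau$ has no poles in $\Cc$, so it is entire.
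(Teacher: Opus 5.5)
Your proof is correct and follows the paper's argument essentially step for step. Poles of $q$ lie only in $\mathcal P_1$, where $1/f$ vanishes by \eqref{eq:c_non_zero}; boundedness on $\R$ rules out real poles; and any remaining pole, via $\pi i$-periodicity and the reflection symmetry, forces $f(\lambda)=f(\bar\lambda)=0$ for some $0<{\rm Im}\,\lambda<\pi$, contradicting \eqref{s_zeros}. (Minor points: $\mathcal P_1$ is in fact exactly conjugation-invariant since $\Z+1/2$ is symmetric about $0$, and the line ${\rm Im}\,z=\pi$ of your closed strip reduces to Step 2 by periodicity.)
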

Indeed, the poles of $\tau$ may only arise from the poles of $q$ or from the zeros of $f$. All poles of $q$ are simple and lie on the set $\mathcal{P}_1$ defined in \eqref{pa}. By~\eqref{eq:c_non_zero}, the function $1/f$ has zero at every point of this set,  so $\tau$ cannot have poles there. Moreover, $\tau$ has no poles on  $\R$, since it is bounded on $\R$. Finally, since $\tau$ is $\pi i$-periodic and real-valued on $\R$, the presence of any pole would force the existence of poles at some $\lambda$ and its conjugate $\bar\lambda$, with $0<\,$Im$\,\lambda<\pi$, which contradicts~\eqref{s_zeros}.

\begin{claim}
    $\tau$ satisfies the estimate in \eqref{eest}.
\end{claim}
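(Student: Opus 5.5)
We aim to show that $\tau$ satisfies \eqref{eest} by writing it as a quotient of two $\pi i$-periodic entire functions and then applying Lemma~\ref{c24}.

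\textbf{Step 1: reduce to entire periodic functions.} Both $f$ and $q=\tau f$ satisfy \eqref{eq:pi_odd}, so $\tau=q/f$ is $\pi i$-periodic. To remove the poles, set $E(z):=e^{z}+e^{-z}$. Multiplying $f$ and $q$ by $E$ does not clear the poles at all points of $\mathcal P_1$, because the poles sit at $\gamma+\pi i(\Z+1/2)$ for every $\gamma\in\Gamma$. So we pass to a common ``denominator''. Let $D$ be an entire $\pi i$-antiperiodic function ($D(z+\pi i)=-D(z)$) with simple zeros exactly on $\mathcal P_1$. The natural choice is a canonical product in the variable $w=e^{2z}$:
$$\Pi(w)=\prod_{\gamma\in\Gamma}\Bigl(1+\frac{w}{e^{2\gamma}}\Bigr),$$
suitably split and normalized into factors over $\gamma>0$ and $\gamma<0$ so that it converges, times $e^{z}$ to fix the antiperiodicity. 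Then $\varphi_1:=qD$ and $\varphi_2:=fD$ are entire and $\pi i$-periodic, and $\tau=\varphi_1/\varphi_2$.

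\textbf{Step 2: verify \eqref{eest} for $\varphi_1,\varphi_2$.} Since $\Gamma$ is separated, the counting function of $\{e^{2\gamma}\}$ in $|w|<r$ is $O(\log r)$. The product $\Pi^+(w)=\prod_{\gamma\ge0}(1+w e^{-2\gamma})$ therefore satisfies $\log M(r,\Pi^+)\le C\log^2 r$, that is $|\Pi^+(e^{2z})|\le Ce^{Kx^2}$. The part over $\gamma<0$ is handled as a product in $1/w$, namely $\prod_{\gamma<0}(1+e^{2\gamma}/w)$, which is bounded by $Ce^{Kx^2}$ as $x\to-\infty$. The two product pieces differ from the required factor by a power of $w$ or a bounded factor; this is harmless, since $e^{Cx}\le Ce^{Kx^2}$. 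By \eqref{2}, $f$ and $q$ are bounded away from $\delta$-neighbourhoods of $\mathcal P_1$. Hence $|\varphi_j|\le Ce^{Kx^2}$ off these discs. On the discs we use the maximum modulus principle: $\varphi_j$ is analytic there, and the bound $e^{Kx^2}$ changes by at most a bounded factor across a disc of radius $\delta$.

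\textbf{Step 3: conclude.} Since $\tau$ is entire by the previous Claim, Lemma~\ref{c24} applies to $\varphi_1/\varphi_2$ and gives \eqref{eest} for $\tau$.

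The main obstacle is Step 1: constructing $D$ so that it is genuinely $\pi i$-antiperiodic and has order-$\log^2$ growth in $w$ in both directions. This means getting the convergence and normalization of the two-sided product right, which is where the relative density and separation of $\Gamma$ are used. As a fallback, we would avoid $D$ altogether. In that approach we would work directly on vertical lines at real parts where $f$ is bounded below, obtained from Hayman's Lemma~\ref{hayman} applied to $fD$, and then repeat the rectangle/Phragm\'en--Lindel\"of argument from the proof of Lemma~\ref{c24}.
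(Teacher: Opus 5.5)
Your proposal is correct and follows essentially the paper's route. You multiply $f$ and $q$ by a two-sided product over $\Gamma$ in the variable $e^{2z}$, split into $\gamma\ge0$ and $\gamma<0$, and bound it by $Ce^{Kx^2}$ using the separation of $\Gamma$. You then transfer this bound to $fD$ and $qD$ via \eqref{2} and the maximum modulus principle, and apply Lemma~\ref{c24}.

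Two of your choices are slightly more careful than the paper's written argument. Your factors $1+e^{2(z-\gamma)}$ vanish on $\mathcal P_1$, whereas the paper's $1-e^{2(z-\gamma)}$ vanish on $\Gamma+\pi i\Z$. Your extra factor $e^{z}$ makes $fD$ and $qD$ genuinely $\pi i$-periodic, as Lemma~\ref{c24} requires; the paper's $fP$ and $qP$ are only antiperiodic.

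The one imprecision is your claim that the bound changes by at most a bounded factor across a $\delta$-disc. That is false, but harmless: since $(|x|+\delta)^2\le 2x^2+2\delta^2$, it is absorbed by enlarging $K$.
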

Set $$P(z)=P_+(z)\cdot P^-(z):=\prod_{\gamma\in\Gamma,\gamma\geq0}\left(1-e^{2(z-\gamma)}\right)\cdot \prod_{\gamma\in\Gamma,\gamma<0}\left(1-e^{-2(z-\gamma)}\right).$$

Clearly, $P^+(z)$ is bounded for $z=x+iy, x<0.$ For $x\geq0$, we have 
$$|P^+(z)|\leq \prod_{\gamma\geq0}\left(1+e^{2(x-\gamma)}\right)\leq\prod_{0\leq \gamma\leq 2x}\left(1+e^{2x}\right)\cdot\prod_{\gamma> 2x}\left(1+e^{-\gamma}\right). $$
Since the set $\Gamma$ is separated, there is a constant $\rho$ such that $\#(\Gamma\cap [0,x])\leq \rho x, x\geq 1.$
Hence $P^+$ satisfies \eqref{eest}, and the same estimate applies to $P^-$. We conclude that $P$ satisfies \eqref{eest}.

Clearly, the functions $qP$ and $fP$ are entire and satisfy \eqref{eq:pi_odd}.
By Lemma \ref{lem:secant_represent}, both $q$ and $f$ are bounded  above outside the set $\mathcal{P}_1$. 
Hence, by the maximum modulus principle, both  $qP$ and $fP$ satisfy \eqref{eest}. 

At this point, applying the same argument used at the end of the proof of sufficiency in Theorem~\ref{th:gauss_ext}, part $(i)$, we conclude that $\tau$ is necessarily constant.

\subsection{Proof of Theorem~\ref{th:hyp_ext} (ii)}  
As usual, set $a=1.$

{\bf Necessity.}
    Assume that condition \eqref{eq:exposed_cond} is not satisfied: 
    there is a $\lambda \in \R$ such that $f$ and $f'$ vanish at $\lambda$.     
    By Lemma~\ref{exp_lemma}, we are done if we verify that the function 
$$q(z):=\frac{f(z)}{(e^{2z}-e^{2\lambda})^2}$$
belongs to $V^1_{\Gamma}(H)$.  Since $f$ satisfies \eqref{eq:pi_odd}, then $q$ is a meromorphic function with simple poles on $\mathcal{P}_1$. 
Similarly to the proof in Section \ref{s42}, one can check that $q$ satisfies the assumptions of Lemma~\ref{lem:secant_represent} (ii), and so $q\in V^1_{\Gamma}(H).$ Hence, condition \eqref{eq:exposed_cond} is necessary.

Let us now check that conditions $e^{\pm 2x}f(x)\not\in L^1(\R)$ are also necessary. 
It suffices to show that the conditions $e^{\pm 2x}f(x)\in L^1(\R)$ imply $e^{\pm 2x}f(x)\in V^1_\Gamma(H)$. We will only prove this for the condition  $e^{2x}f(x)\in L^1(\R)$, since the proof for the remaining case is similar.

Observe that the function $e^{2z}f(z)$ satisfies \eqref{eq:pi_odd}, so that it is also integrable on the line $\R+i\pi$. 

In what follows, we construct an auxiliary function with prescribed values on the set $\Gamma + \pi i/2$. 
To this end, we use the solution of the interpolation problem in the Bernstein space $B(\sigma)$. 
This space consists of all entire functions of exponential type at most $\sigma$ that are uniformly bounded on $\mathbb{R}$. In particular, every function from $B(\sigma)$ is uniformly bounded on every horizontal strip in $\Cc$ (see, e.g., \cite{MR3468930} for further details).
Since the set $\Gamma$ is separated, the set $\Gamma+\pi i/2$ is an interpolation set for $B(\sigma)$, for every sufficiently large $\sigma>0$, see \cite{MR1057613} or \cite{MR3468930}. Hence, for every $\delta>0$ there is a function $B_\delta\in B(\sigma)$  such that
$$B_\delta(\gamma+i\pi/2)=-e^{i\pi\gamma\delta}\frac{\overline{c_\gamma(f)}}{|c_\gamma(f)|},\quad \text{for every } \gamma\in\Gamma.$$
Observe that the $L^\infty$-norm of $B_\delta$ on the strip $0\leq\, {\rm Im\,}z\leq\pi$ depends only on $\sigma$, the separation constant of $\Gamma$ and the sup-norm of the sequence $|-ie^{i\gamma\delta}c_\gamma/|c_\gamma||=1,\gamma\in\Gamma$, so that it does not depend on $\delta.$

Given $R>0$, consider the contour $C_R$ which consists of the segments $[-R,R]\cup[R,R+\pi i]\cup[R+\pi i, -R+\pi i]\cup[-R+\pi i,-R]$. Fix $\sigma>0$. By  Lemma 4.1, for every small $\epsilon>0$ and large $R$, such that $|R-\gamma|>\epsilon,\gamma\in\Gamma$, the function $f_\delta(z):=e^{2z}f(z)B(z)e^{-\delta z^2}$ is integrable on $C_R$, and its $L^1$-norm on $C_R$ is uniformly bounded as $R\to\infty$. Integrating $f_\delta$ over $C_R$ and letting $R\to\infty $, by the residue theorem, we have 
$$\lim\limits_{R \to \infty}\int_{C_R}f_\delta(z)\,dz=2\pi i \sum_{\gamma\in\Gamma}\text{Res}(f_\delta,\gamma+i\pi/2)=\pi e^{\delta\pi^2/4}\sum_{\gamma\in\Gamma}|c_\gamma|e^{2\gamma-\delta \gamma^2}<K,$$where the constant $K$ does not depend on $\delta.$ By letting $\delta\to 0$, we conclude that the series converges:
\begin{equation}\label{cgamma}\sum_{\gamma\in\Gamma}|c_\gamma|e^{2\gamma}<\infty.\end{equation}

Now, write $$e^{2x}f(x)=e^{2x}\sum_{\gamma\in\Gamma}\frac{c_\gamma}{e^{x-\gamma}+e^{\gamma-x}}=\sum_{\gamma\in\Gamma}c_\gamma\left(e^\gamma e^x-\frac{e^{2\gamma}}{e^{x-\gamma}+e^{\gamma-x}}\right).$$
By \eqref{cgamma}, we see that the function $$\sum_{\gamma\in\Gamma}\frac{c_\gamma e^{2\gamma}}{e^{x-\gamma}+e^{\gamma-x}}\in L^1(\R).$$Therefore,
$$e^{x}\sum_{\gamma\in\Gamma}c_\gamma e^{\gamma}\in L^1(\R),$$which implies \begin{equation}\label{cgamma1}\sum_{\gamma\in\Gamma}c_\gamma e^{\gamma}=0.\end{equation}We conclude that 
$$e^{2x}f(x)=\sum_{\gamma\in\Gamma}\frac{c_\gamma e^{2\gamma}}{e^{x-\gamma}+e^{\gamma-x}}\in V^1_\Gamma(H).$$

\begin{corollary}\label{c43}
  Let $f\in V^1_\Gamma(H)$. The function  $e^{2x}f(x)$ is integrable if and only if both conditions \eqref{cgamma} and \eqref{cgamma1} are fulfilled.  
\end{corollary}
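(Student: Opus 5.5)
The corollary has two directions, and the argument just given already proves one of them.

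\emph{Necessity.} Assume $e^{2x}f(x)\in L^1(\R)$. The residue computation on the contours $C_R$, using the interpolating functions $B_\delta$ and the damping factor $e^{-\delta z^2}$, yields \eqref{cgamma}. Next we use the decomposition
$$e^{2x}\,\frac{1}{e^{x-\gamma}+e^{\gamma-x}}=e^{\gamma}e^{x}-\frac{e^{2\gamma}}{e^{x-\gamma}+e^{\gamma-x}}.$$
Together with \eqref{cgamma} it gives
$$e^{x}\sum_{\gamma}c_\gamma e^{\gamma}\in L^1(\R).$$
Since $e^x\notin L^1(\R)$, the constant $\sum_\gamma c_\gamma e^\gamma$ must vanish, which is \eqref{cgamma1}. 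So this direction amounts to collecting the two facts already established, with nothing new required.

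\emph{Sufficiency.} Assume \eqref{cgamma} and \eqref{cgamma1} hold. By \eqref{cgamma}, $\{c_\gamma\}\in l^1$, so the series $\sum_\gamma c_\gamma e^{\gamma}e^{x}$ converges absolutely for each $x$. The same holds for $\sum_\gamma c_\gamma e^{2\gamma}/(e^{x-\gamma}+e^{\gamma-x})$, because its coefficients are bounded by $|c_\gamma|e^{2\gamma}$. Hence we may split the series termwise using the identity above:
$$e^{2x}f(x)=e^{x}\sum_{\gamma}c_\gamma e^{\gamma}\;-\;\sum_{\gamma}\frac{c_\gamma e^{2\gamma}}{e^{x-\gamma}+e^{\gamma-x}}.$$
By \eqref{cgamma1}, the first term is zero. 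The second term is $-1$ times a function in $V^1_\Gamma(H)$, since its coefficient sequence lies in $l^1(\Gamma)$ by \eqref{cgamma}. By the right-hand inequality in \eqref{eq:norm_equiv}, that function lies in $L^1(\R)$, and therefore $e^{2x}f(x)\in L^1(\R)$.

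\emph{Main obstacle.} The only real difficulty is the necessity part, namely deriving \eqref{cgamma}, and this is exactly what the contour argument above handles. The points that need care there are the uniform $L^1$ bounds on the vertical sides of $C_R$ and the residue evaluation. Neither needs to be redone here. I would also remark that the case $e^{-2x}f(x)$ follows by the reflection $x\mapsto -x$, which maps $\Gamma$ to $-\Gamma$, giving the symmetric statement of Remark \ref{rem}.
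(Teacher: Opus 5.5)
Your proposal is correct and follows the paper's own argument. Necessity uses the contour/residue computation to get \eqref{cgamma}, and then the identity $e^{2x}/(e^{x-\gamma}+e^{\gamma-x})=e^{\gamma}e^{x}-e^{2\gamma}/(e^{x-\gamma}+e^{\gamma-x})$ forces \eqref{cgamma1}; sufficiency reads the same decomposition backwards, and you correctly keep the minus sign that the paper's final display drops. One small correction: $\{c_\gamma\}\in l^1(\Gamma)$ comes from the hypothesis $f\in V^1_\Gamma(H)$, not from \eqref{cgamma}, which only controls $\gamma\to+\infty$. Absolute convergence of $\sum_\gamma c_\gamma e^{\gamma}$ then uses both facts, via $e^{\gamma}\le 1+e^{2\gamma}$.
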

 Similarly, one may check that   $e^{-2x}f(x)\in L^1(\R)$ if and only if we have
 $$\sum_{\gamma\in\Gamma}|c_\gamma|e^{-2\gamma}<\infty,\quad \sum_{\gamma\in\Gamma}c_\gamma e^{-\gamma}=0.$$

{\bf Sufficiency.}
The proof follows the same strategy as the sufficiency part of Theorem~\ref{th:gauss_ext}\,(ii), see Section \ref{34}. For brevity, we leave the remaining details to the reader.

\section{Acknowledgements}
The authors are very thankful to Aleksei Kulikov for several helpful remarks that improved the manuscript.

\end{document}